\newcommand{\range}{\mbox{\rm range}}
\newcommand{\nullsp}{\mbox{\rm null}}
\newcommand{\mathR}{\mathbb{R}}
\newcommand{\mathC}{\mathbb{C}}
\newcommand{\diag}{\mbox{\rm diag}}
\newcommand{\one}{\mathbf{1}}
\newcommand{\fov}[1]{\mathcal{F}(#1)}
\newtheorem{theorem}{Theorem}
\newproof{proof}{Proof}
\journal{Applied Numerical Mathematics}
\begin{document}

\begin{frontmatter}

\title{An Adaptively Constructed Algebraic Multigrid Preconditioner for Irreducible Markov Chains \tnoteref{title_fn} \tnotetext[title_fn]{The work of the first author was partially supported by the National Science Foundation grant DMS-1320608 and by a U.S. Department of Energy subcontract B605152.}} 
\address[address2]{Department of Mathematics, Bergische Universit\"at Wuppertal, 42097 Germany}
\address[address1]{Department of Mathematics,
Pennsylvania State University, University Park, PA 16802, USA}

\author[address1]{James Brannick}
\author[address2]{Karsten Kahl} %\ead{kkahl@math.uni-wuppertal.de}
\author[address2]{Sonja Sokolovic} \ead{sokolovic@math.uni-wuppertal.de, +49 202 439 3776}

\begin{abstract}
The computation of stationary distributions of Markov chains is an important task
in the simulation of stochastic models. The linear systems arising in such applications involve
non-symmetric M-matrices, making algebraic multigrid
methods a natural choice for solving these systems.
In this paper we investigate extensions and
improvements of the bootstrap algebraic multigrid framework for solving these systems.
This is achieved by reworking the bootstrap setup process to use singular vectors
instead of eigenvectors in constructing interpolation and restriction.
We formulate a result concerning the convergence speed of GMRES for singular systems and experimentally justify why rapid convergence of the proposed method can be expected. We demonstrate its fast convergence and the favorable scaling behaviour for various test problems.

% Our main focus is on the stochastic matrices arising in the computation of stationary distributions of irreducible Markov chains. We use the resulting multigrid method as a preconditioner for a Krylov subspace iteration, give a theoretical justification for its convergence and illustrate its nice scaling behavior by a wide variety of numerical examples.

\end{abstract}

\begin{keyword}
  Markov chains, algebraic multigrid, bootstrap AMG, adaptive methods, least squares interpolation, singular vectors, preconditioned GMRES iteration, nonsymmetric systems
\end{keyword}

%\pagestyle{myheadings}
%\thispagestyle{plain}
%\markboth{JAMES BRANNICK, KARSTEN KAHL, SONJA SOKOLOVIC}{ADAPTIVE ALGEBRAIC MULTIGRID FOR MARKOV CHAINS}

\end{frontmatter}

%Andreas' suggestions: \af{red} are new formulations, \del{green} to be deleted, \cmt{comments}

\section{Introduction}\label{sec:introduction}
%Given the (nonsymmetric) transition matrix $A$ of a discrete Markov chain, the goal is to compute the \emph{steady state vector} $x \neq 0$ of the Markov chain which is an eigenvector of $A$ corresponding to the eigenvalue 1, i.e., it satisfies
Given the nonsymmetric transition matrix $A$ of a discrete
Markov chain, the task is to compute the steady state vector $x \neq  0$ of the Markov
chain such that
\begin{equation}\label{eq:Axx}
Ax = x,
\end{equation}
corresponding to the eigenvector of $A$ with eigenvalue 1.
If the underlying  Markov chain defining the matrix A is irreducible, then the existence and uniqueness (up to a scalar factor) of $x$ can be guaranteed by the Perron-Frobenius theorem (cf.~\cite{BermanPlemmons1994}). To compute the steady state vector $x$, we reformulate \eqref{eq:Axx} as a (singular) linear system of equations
\begin{equation}\label{eq:Bx0}
Bx = 0, \text{ where } B = I - A
\end{equation}
as has been proposed in, e.g.,~\cite{BoltenBrandtBrannickFrommerKahlLivshits2011,DeSterckManteuffelMcCormickRugeMillerPearsonSanders2010}.
We then adaptively construct an algebraic multigrid hierarchy for the matrix $B$ by means of the \emph{bootstrap algebraic multigrid framework} (BAMG)~\cite{BoltenBrandtBrannickFrommerKahlLivshits2011,BrandtBrannickKahlLivshits2011,Kahl2009,ManteuffelMcCormickParkRuge2010}. In contrast to the BAMG  approach considered in \cite{BoltenBrandtBrannickFrommerKahlLivshits2011}, which aimed at approximating eigenvectors of $A$ in the bootstrap setup, we now take left and right singular vectors of $A$ as test vectors for the construction of the transfer operators of the multigrid hierarchy.
This approach has been developed for nonsymmetric M matrices coming from discretization of convection diffusion
problems ~\cite{BrezinaManteuffelMcCormichRugeSanders2010} and has been shown to be effective
in practice for some problem classes. As we show here, such a strategy is in addition well suited for transition matrices coming from Markov chains.
 %This approach has been developed for general nonsymmetric, non-singular matrices %
% proven to be generally adequate for problems involving nonsymmetric matrices
%As we show here, its adaptation to the singular
%case is especially well suited when dealing with transition matrices of Markov chains.

In a second phase of the proposed method, % proposed in this paper
the adaptive process is exited and the existing multigrid hierarchy is used to formulate a  preconditioner for a GMRES solver applied to the residual equation, computed from the approximation $x$ obtained in the first (setup) phase. % which uses the result from the previous phase as a first approximation to $x$.
Since the system matrix $B$ is singular, the convergence of the preconditioned GMRES iteration cannot be guaranteed in general, % a priori
but we find that in practice the preconditioned method convergences for a wide variety of test problems.
% we will derive convergence results covering the two-grid case using the analysis of Krylov methods for singular systems presented in~\cite{HayamiSugihara2011,IpsenMeyer98}.

While the algorithm we develop can be viewed as a general approach for solving nonsymmetric problems, we focus here on nonsymmetric and singular $M$-matrix systems arising in Markov chain applications. Important applications in which one needs the steady state solution of a Markov chain include statistical mechanics~\cite{Moyal1949}, queuing theory~\cite{Neuts1978}, analysis of telecommunication networks~\cite{KriegerMuller_ClostermannSczittnick1990}, information retrieval \cite{Benoit2005} and web ranking (e.g., Google \mbox{PageRank})~\cite{DeSterckManteuffelMcCormickNguyenRuge2008, LangvilleMeyer2006}. Many different methods for computing the steady state distribution have been proposed in the literature. These include the so called aggregation/disaggregation methods. These can be related to algebraic multigrid methods in the sense that they use a series of increasingly smaller Markov chains which are generated by aggregating groups of states into a single state. Most aggregation/disaggregation methods are two-level methods (cf.~\cite{Haviv1987,KouryMcAllisterStewart1984,Krieger1995}) as they are based on the original approach in~\cite{SimonAndo1961,Takahashi1975}, and it took approximately twenty years until multi-level aggregation/disaggregation methods were proposed~\cite{HortonLeutenegger1994,HortonLeutenegger1994_2}. Other, more recent, research on the design of multilevel approaches for Markov chains include the development of methods related to the \emph{smoothed aggregation multigrid} framework~\cite{DeSterckManteuffelMcCormickNguyenRuge2008,DeSterckManteuffelMcCormickRugeMillerPearsonSanders2010,DeSterckMillerSandersWinlaw2010}, a Schur complement based multilevel approach used as a preconditioner for the GMRES iteration~\cite{Virnik2007}, methods based on iterant recombination with minimization in the $\ell_1$- or $\ell_2$-norm~\cite{DeSterckManteuffelMillerSanders2010,DeSterckMillerSanders2011}, and a bootstrap method~\cite{BoltenBrandtBrannickFrommerKahlLivshits2011} which serves as the starting point for the new method presented in this paper. 

The remainder of this paper is organized as follows: In section~\ref{sec:markovchains} and~\ref{sec:amg}, we provide an overview of basic material concerning Markov chains and multigrid methods, respectively. In section \ref{sec:bamg} we give a review of the idea and the various components of the bootstrap algebraic multigrid framework and provide details concerning the use of singular vectors as test vectors. Some special adjustments of the framework, making it suitable for dealing with Markov chains, are described in section \ref{sec:markovbamg}. In section \ref{sec:preconditioned_gmres}, details on how to use the BAMG hierarchy as a preconditioner for a GMRES iteration are given, and the speed of convergence of the resulting method is investigated. Several numerical examples illustrating the performance of the resulting method are given in section \ref{sec:examples}. In section \ref{sec:conclusion}, concluding remarks and an outlook on topics for future research are given. 

\section{Markov chains}\label{sec:markovchains}
A discrete finite Markov chain with states $\{1,\dots,n\}$ and transition probabilities $p_{ij} \geq 0$ between these states can be identified with a matrix $A \in \mathbb{R}^{n \times n}$, the transition matrix, by setting $a_{ij} = p_{ji}$. By this definition the transition matrix is column stochastic, i.e., $a_{ij} \geq 0$ for all $i,j$ and the column sums $\sum_{i=1}^n a_{ij}$ of $A$ are one for all columns $j$. In addition, we will assume that the matrix $A$ is \emph{irreducible}, i.e., there exists no permutation matrix $\Pi$ such that
\begin{equation*}\label{eq:irreducible}
\Pi^TA\Pi = \left[\begin{array}{cc} A_{11} & 0 \\ A_{21} & A_{22}\end{array} \right].
\end{equation*}
A geometric interpretation of irreducibility is that the directed graph induced by $A$ is strongly connected. For an irreducible transition matrix $A$ the Perron-Frobenius theorem (cf.~\cite{BermanPlemmons1994}) implies that the system
\begin{equation}\label{eq:steadystatesystem}
Ax = x
\end{equation}
always has a unique solution $x$ (up to scaling) with all its entries strictly positive.

To determine the steady state vector that solves \eqref{eq:steadystatesystem} by an algebraic multigrid method, we first reformulate the eigenproblem as a linear system 
\begin{equation}\label{eq:steadystatesystemlinear}
Bx = 0, \text{ where } B = I - A.
\end{equation}
Since $1$ is a simple eigenvalue of $A$, the matrix $B$ is singular and $\text{rank}(B) = n-1$.

\section{Algebraic multigrid}\label{sec:amg}
Before describing the bootstrap algebraic multigrid framework in section~\ref{sec:bamg}, we give a very brief review of the basic concepts and individual components of a multigrid algorithm. The first ingredient of any multigrid method is the \emph{smoothing scheme}, which is often a stationary iterative method based on a splitting of the matrix $B$, e.g., weighted Richardson, weighted Jacobi, Gauss-Seidel~\cite{Hackbusch1994} or Kaczmarz relaxation~\cite{Kaczmarz1937}. Any stationary smoothing iteration is based on a matrix splitting $B = F-G$ with $F$ non-singular,
and the smoothing iteration for a system $Bx=b$ reads
\begin{equation} \label{smoothing:eq}
x^{(k+1)} = F^{-1}G x^{(k)} + F^{-1}b =: \mathcal{S}(x^{(k)},b), \enspace k=0,1,\ldots.
\end{equation}  
Its error propagation is given by
\begin{equation}\label{eq:error_propagator_smoothing}
e^{(k+1)} = Se^{(k)} \mbox{ with } S = I - F^{-1}B,
\end{equation}
where $e^{(k)} = x - x^{(k)}$ is the error of the $k$-th iterate $x^{(k)}$. For weighted Jacobi, e.g., we have $F = \omega D$, with $D$ the diagonal of $B$.
The general idea of multigrid methods is based on the observation that smoothing acts 
locally (it combines values of variables which are neighbors in the graph of the matrix) %  (e.g., Richardson, Jacobi)
and it therefore tends to eliminate certain error components, the ``local'' %(or high frequency)
errors, very fast, whereas other, ``global'', components remain almost unchanged. This results 
in slow overall convergence after an initial phase where the norm of the error is reduced substantially. For these local iterative smoothing schemes the ansatz is made that the error components $e$ which cause slow convergence satisfy the inequality
\begin{equation}\label{eq:smooth_error}
\|Be\| \ll \|e\|,
\end{equation}
and they are referred to as \emph{algebraically smooth} error.
Typically, in a multigrid method, a few smoothing iterations are first applied to substantially
reduce the algebraically non-smooth error components. The residual is then restricted to a subspace of smaller dimension, the \emph{coarse grid}, using a restriction operator $Q$, where the 
remaining algebraically smooth error can be treated more efficiently. After computing an approximate representation 
of this error in the subspace from the restricted residual, using a suitably defined  \emph{coarse grid operator} $B_c$, this error representation is interpolated back to the original fine space using an interpolation operator $P$. It is then added as the {\em coarse grid correction} to the current approximation there, and the thus corrected approximation is smoothed again. If the linear system on the coarse grid is solved exactly, the error propagation due to the coarse grid correction process is given by
\begin{equation}\label{eq:coarse_grid_correction}
e^{(k+1)} = e^{(k)} - PB_c^\dagger Qr^{(k)},
\end{equation} where $r^{(k)} = Be^{(k)} = -Bx^{(k)}$ is the residual of~\eqref{eq:steadystatesystemlinear} and $B_c^\dagger$ denotes the Moore-Penrose pseudo inverse of $B_c$, see, e.g.,~\cite{GolubVanLoan1996}.
Including $\nu_1$ pre- and $\nu_2$ postsmoothing iterations, the error propagation matrix of the entire two-grid method is therefore
\begin{equation}\label{eq:error_propagator_twogrid}
E = (I-F^{-1}B)^{\nu_1}(I - PB_c^\dagger QB)(I-F^{-1}B)^{\nu_2}.
\end{equation}
Recursive application of this approach (instead of solving exactly on the coarse grid) gives rise to a multigrid (instead of a two-grid) method.

In this paper, for the sake of simplicity, we only consider $V$-cycles in the solve phase, i.e., we solve the coarse-level equation only once on each level within each multigrid iteration, % as depicted in Figure~\ref{fig:vcycle}
but the method we propose can easily be generalized to incorporate other cycling strategies, cf.~\cite{AxelssonVassilevski1989,TrottenbergOsterleeSchueller2001}. In the following, we always denote the restriction operator by $Q$, the interpolation operator by $P$, and we assume that the coarse grid subspace is given by a subset of the variables of the original fine space. We denote the sets of coarse and fine (and not coarse) variables by $\mathcal{C}$ and $\mathcal{F}$, respectively, with %  as disjoint subsets of the original variables with
$\mathcal{C} \cup \mathcal{F} = \{1,\dots,n\}$ and $\mathcal{C} \cap \mathcal{F} = \emptyset$. This choice of coarse grid variables is often referred to as a $\mathcal{C}/\mathcal{F}$-splitting (cf.~\cite{RugeStueben1986}). As coarse grid operator we choose the Petrov-Galerkin operator $B_c = QBP$. Whenever possible a two-grid notation is used for notational simplicity, with $n_c$ 
standing for the number of coarse variables. In cases where we need to consider all levels of the hierarchy we number them from 1 to $L$, where 1 denotes the finest grid and $L$ the coarsest.

\section{Bootstrap algebraic multigrid (BAMG)}\label{sec:bamg}
In this section, we review the components of the basic BAMG framework and discuss the incorporation of singular vectors as test vectors in the adaptive process. The BAMG framework~\cite{BrandtBrannickKahlLivshits2011,Kahl2009} is a fully adaptive algebraic multigrid method for the solution of general linear systems. It is based on three main components:
\begin{enumerate}
\item compatible relaxation to determine the set of coarse variables $\mathcal{C}$, 
\item least squares based computation of the transfer operators, and 
\item adaptive construction of appropriate test vectors used to build the transfer operators.
\end{enumerate}
This paper focuses on the second and third components of the BAMG process. We refer to~\cite{Brandt2000,JBrannick_2005a,BrannickFalgout2010} for details on compatible relaxation and to \cite{BoltenBrandtBrannickFrommerKahlLivshits2011} for its application to Markov chain problems.

\subsection{Least squares based interpolation}\label{section:LSI}
Given a set of coarse variables, e.g., computed by compatible relaxation, or resulting from geometric coarsening, the next task in the AMG setup phase is to compute a restriction operator, $Q$, and an interpolation operator, $P$, to transfer residuals from fine to coarse grids and to transfer corrections from coarse to fine grids. A careful choice of these operators is crucial to the design of an efficient multigrid method. 

In classical multigrid theory (cf.~\cite{RugeStueben1986}) the \emph{(strong) approximation property} is used to guide the construction of intergrid transfer operators that yield an efficient method. The approximation property can be formulated as the condition
\begin{equation}\label{eq:approx}
\min\limits_{v_c} \|v - Pv_c\|_B^2 \leq \frac{K}{\|B\|_2}\left\langle Bv,Bv\right\rangle \text{ for a small constant $K > 0$,}
\end{equation}
for each $v$ on the fine grid when $B$ is assumed to be positive definite. Its essence is that a vector $v$ needs to be approximated in the range of $P$ with an accuracy that is inversely proportional to $\|Bv\|$. This is motivated by the fact that vectors $v$ with $\|Bv\|\ll \|v\|$ (i.e., algebraically smooth vectors) dominate the error after applying the smoothing iteration and must therefore be reduced via the coarse grid correction. If no a priori information on algebraically smooth vectors is known, a common idea for adaptively constructing suitable transfer operators is to gather information on algebraically smooth vectors in terms of {\em test vectors} and then determine $P$ so that these test vectors lie in or near the range of $P$.
In the context of BAMG, this is done by determining an interpolation operator that approximates a set of given vectors in a least squares sense. Specifically, given a set of test vectors $\mathcal{V} = \{v^{(k)}, k = 1,\dots r$\} such that $\|v^{(k)}\|_2=1$, a weighted least squares problem of the form
\begin{equation}\label{eq:ls_interpolation}
\min \mathcal{L}(P_i) = \sum\limits_{k=1}^r \omega_k \Big(v^{(k)} - \sum\limits_{j \in \mathcal{J}_i} (P_{i})_{j} (Rv^{(k)})_j \Big)^2
\end{equation}
is solved for each row $P_i = (p_{ij})_{j \in \mathcal{J}_i}$ of the prolongation operator $P$ belonging to a fine-level variable $i \in \mathcal{F}$. In (\ref{eq:ls_interpolation}) $R$ denotes the canonical injection which maps every vector onto its coarse grid components and $\mathcal{J}_i$ denotes the index set of (coarse) variables from which the $i$-th variable interpolates, i.e., the non-zero pattern of row $P_i$, cf.~Figure~\ref{fig:interpolation}. The interpolation acts as the identity on the coarse grid variables $\mathcal{C}$ % like the identity
such that, when ordering the $\mathcal{F}$ variables first, $P$ has the structure 
\begin{equation}\label{eq:Pstructure}
  P = \left[
    \begin{matrix}
      P_{fc}\\I
    \end{matrix}\right].
\end{equation} 
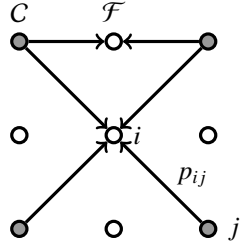
\begin{figure}
\centering
\tikzstyle{Cpoint}=[circle,draw=black,fill=black!40,inner sep=0pt,minimum size=2mm]
\tikzstyle{Fpoint}=[circle,draw=black!100,fill=white,inner sep=0pt,minimum size=2mm]
      \begin{tikzpicture}[very thick, node distance=1cm,auto]
          \node[Cpoint] (11) [label=above:{$\mathcal{C}$}]{};
          \node[Fpoint] (12) [label=above:{$\mathcal{F}$}] [right=of 11] {};
          \node[Cpoint] (13) [right=of 12] {};
          \node[Fpoint] (21) [below=of 11] {};
          \node[Fpoint,label=right:{$i$}] (22) [right=of 21] {};
          \node[Fpoint] (23) [right=of 22] {};
          \node[Cpoint] (31) [below=of 21] {};
          \node[Fpoint] (32) [right=of 31] {};
          \node[Cpoint] (33) [right=of 32,label=right:{$j$}] {};
%          \node[Fpoint] (41) [below=of 31,label=right:{$\mathcal{F}$}] {};
 %         \node[Cpoint] (43) [below=of 33,label=right:{$\mathcal{C}$}] {};

          \path[<-] (12) edge (11);
          \path[<-] (12) edge (13);
      
          \path[<-] (22) edge (11);
          \path[<-] (22) edge (13);
          \path[<-] (22) edge (31);
          \path[<-] (22) edge node[near end, above,xshift=4pt,yshift=2pt] {$p_{ij}$} (33);
      
        \end{tikzpicture}
\caption{Interpolation naming convention: The $\mathcal{F}$-variable $i$ is interpolated from the  four gray $\mathcal{C}$-variables which form the set $\mathcal{J}_i$.}
\label{fig:interpolation}
\end{figure}

The sets $\mathcal{J}_i$ can be determined using the geometry of the problem or can be chosen from a local neighborhood of $i$ in the graph corresponding to $B$ using a greedy algorithm, see~\cite{BoltenBrandtBrannickFrommerKahlLivshits2011,BrandtBrannickKahlLivshits2011}. When an algebraic approach is used, a threshold $c$ for the number of interpolation variables has to be set to ensure the sparsity of the interpolation operator and, thus, to limit the growth of the operator complexity
\begin{equation} \label{operator_complexity_def:eq}
  o_c = \frac{1}{\mbox{nnz}(B)}\sum_{i = 1}^{L}\mbox{nnz}(B_{i}).
\end{equation}
In most practical settings, a good value of $c$ ranges between 1 and 4. For details on the choice of $\mathcal{J}_i$, we refer to~\cite{BoltenBrandtBrannickFrommerKahlLivshits2011,BrandtBrannickKahlLivshits2011}. The weights, $\omega_k$, are chosen such that the algebraically smoothest vectors have the largest weights in the minimization process. A suitable choice, which we use exclusively in our numerical experiments, is given by
\begin{equation}\label{eq:ls_weights}
\omega_k = \frac{1}{\|Bv\|^2}.
\end{equation}

In~\cite{BrezinaManteuffelMcCormichRugeSanders2010}, a heuristic argument was proposed suggesting the use of (approximated) singular vectors as test vectors in the nonsymmetric case. The argument follows from the singular value decomposition $B = U\Sigma V^T$.  Let the matrix $W = VU^T$ and define the symmetric positive semidefinite matrices $WB = \sqrt{B^TB}$ and $BW = \sqrt{BB^T}$. Then the original nonsymmetric system $Bx=0$ can be reformulated in two ways as an equivalent symmetric system using $WB$ or $BW$ as the system matrix. The eigenvectors corresponding to the minimal eigenvalues of $WB$ are the \emph{right} singular vectors corresponding to the minimal singular values of $B$ and those for $BW$ are the \emph{left} singular vectors corresponding to minimal singular values of $B$. Because $WB$ and $BW$ are symmetric positive semi-definite, this fact can be used to derive an approximation property for the original problem involving $B$, assuming that $P$ and $Q$ are based on singular vectors. 

Determining the restriction operator $Q$ for $B$ is equivalent to determining the transpose of the interpolation operator for $B^T$. This
observation naturally leads to a least squares problem similar to \eqref{eq:ls_interpolation} for each column of $Q$, where the weights are now given by $1/\|B^Tv\|^2$. This construction, in turn, requires an additional set $\mathcal{U} = \{u^{(k)}, k = 1,\dots,r\}$, of left test vectors.  An efficient multilevel algorithm for constructing the sets $\mathcal{U}$ and $\mathcal{V}$ will now be described.

\subsection{Bootstrap construction of test vectors} \label{bootstrap:subsec}
In this section, a BAMG cycle for computing approximations of singular vectors with small singular values is developed. The algorithm is based on the approach from~\cite{BrandtBrannickKahlLivshits2011} for approximating small eigenvalues and the corresponding eigenvectors for symmetric operators (where $Q=P^T$, $B_c = P^TBP$). In this approach, a preliminary multigrid hierarchy is constructed and (generalized) eigenvectors
\begin{equation}\label{eq:original_bamg_eigenproblem}
P^TBPv = \lambda P^TPv
\end{equation}
of the operator on the coarsest grid are determined and interpolated to the finest grid in a suitable way. They are then used as new test vectors for another setup cycle. This approach can be extended to singular vectors by noting that the problem of computing singular vectors is equivalent to solving a symmetric eigenproblem of twice the size, cf.~\cite{GolubVanLoan1996}.
This has also been observed in~\cite{DeSterck2011} in the context of adaptive algebraic 
multigrid methods for computing singular triplets. In the following, we describe the details of this construction.\\

To determine suitable approximations to the singular vectors, we consider the equations
\begin{equation}\label{eq:singular_vectors}
\left.
\begin{array}{rcl}
Bv_i &=& \sigma u_i \\
B^Tu_i &=& \sigma v_i
\end{array} \right\} \enspace \text{ for }i = 1,\dots,n,
\end{equation}
characterizing the singular vectors of $B$. These equations can be combined into one system of the form
\begin{equation}\label{eq:singular_vectors_2n}
\left[\begin{array}{cc} 0 & B \\ B^T & 0 \end{array}\right] \left[\begin{array}{cc} U & U \\ V & -V \end{array}\right] = \left[\begin{array}{cc} U & U \\ V & -V \end{array}\right] \left[\begin{array}{cc} \Sigma & 0 \\ 0 & -\Sigma
\end{array}\right],
\end{equation} where $U=\left[ \ u_1 \mid \dots \mid u_n \ \right],
V=\left[ \ v_1 \mid \dots \mid v_n \ \right]\text{ and }
\Sigma= \diag(\sigma_1,\dots, \sigma_n).$
Thus the eigenvalues of the symmetric matrix 
$$\widehat{B}=\left[\begin{array}{cc} 0 & B \\ B^T & 0 \end{array}\right]$$
are the singular values of $B$ and the corresponding eigenvectors contain the singular vectors of $B$. Hence, we can apply the original bootstrap process to the symmetric matrix $\widehat{B}$. To this end define
\begin{equation} \label{Phatdef:eq}
\widehat{P} = \left[\begin{array}{cc} Q^T & 0 \\ 0 & P \end{array}\right]
\end{equation} 
and observe that in the variational sense an eigenpair $(\widehat{x},\widehat{\lambda})$ for $\widehat{B}$, satisfying $\widehat{B}\widehat{x} = \widehat{\lambda}\widehat{x}$, corresponds to an eigenpair $(x_c,\lambda_c)$ of the generalized eigenproblem $\widehat{P}^T\widehat{B}\widehat{P}x_c = \lambda_c \widehat{P}^T\widehat{P}x_c$ on the coarse level,
cf.~\cite{BrandtBrannickKahlLivshits2011,Kahl2009}. With the choice \eqref{Phatdef:eq} of $\widehat{P}$, we obtain the generalized (symmetric) eigenvalue problem on the coarse grid, 
\begin{equation}\label{eq:singular_vectors_2n_coarse}
\left[\begin{array}{cc} 0 & QBP \\ (QBP)^T \hspace{-0.13cm} & 0 \end{array}\right] \hspace{-0.13cm}
\left[\begin{array}{cc} U_c & U_c \\ V_c & \hspace{-0.13cm} -V_c \end{array}\right] = \left[\begin{array}{cc} QQ^T & 0 \\ 0 & \hspace{-0.13cm} P^TP \end{array}\right] \hspace{-0.13cm} \left[\begin{array}{cc} U_c & U_c \\ V_c & \hspace{-0.13cm} -V_c \end{array}\right] \hspace{-0.13cm} \left[\begin{array}{cc} \Sigma_c & 0 \\ 0 & \hspace{-0.13cm}-\Sigma_c \end{array}\hspace*{-0.1cm}\right], 
\end{equation}
a generalization of \eqref{eq:singular_vectors_2n}.
Alternatively one can also derive \eqref{eq:singular_vectors_2n_coarse} using a Petrov-Galerkin approach as follows: Assume that a singular value $\sigma$ of $B$ is known and some lower-dimensional subspaces $\mathcal{K}_1,\mathcal{K}_2$ are given. We search for vectors $u_c \in \mathcal{K}_1, v_c \in \mathcal{K}_2$ which approximately satisfy
\begin{equation}\label{eq:singular_vectors_galerkin}
\begin{array}{rcl}
Bv_c &=& \sigma u_c,\\
B^Tu_c &=& \sigma v_c.
\end{array}
\end{equation}
To identify the approximations $u_c, v_c$ we demand that the residuals $Bv_c-\sigma u_c$ and $B^Tu_c-\sigma v_c$ be orthogonal to some subspaces $\mathcal{L}_1,\mathcal{L}_2$ with $\dim\mathcal{L}_i = \dim\mathcal{K}_i$. If we set $\mathcal{K}_1 = \mathcal{L}_2 = \range(Q^T)$, the column span of the matrix $Q^T$, and $ \mathcal{K}_2 = \mathcal{L}_1 = \range(P)$, \eqref{eq:singular_vectors_galerkin} can be rewritten as
\begin{equation}\label{eq:singular_vectors_coarse}
\begin{array}{rcl}
QBPv_c - \sigma QQ^Tu_c &=& 0,\\
P^TB^TQ^Tu_c - \sigma P^TPv_c &=& 0,
\end{array}
\end{equation}
which is equivalent to \eqref{eq:singular_vectors_2n_coarse}.
Recursive application of this approach leads to
\begin{equation}\label{eq:singular_vectors_coarsest}
\begin{array}{rcl}
B_l v_l - \sigma M_l u_l &=& 0,\\
B_l^Tu_l - \sigma N_l v_l &=& 0,
\end{array}
\end{equation}
where $M_l = Q_l \cdots Q_1 Q_1^T \cdots Q_l^T$ and $ N_l = P_l^T\cdots P_1^TP_1\cdots P_l$ are the accumulated interpolation and restriction operators, respectively.
On the coarsest level, all possible solutions to \eqref{eq:singular_vectors_coarsest}
are obtained via the generalized eigenvalue problem
\begin{equation}\label{eq:generalized_eigenvalue}
\left[\begin{array}{cc} 0 & B_L \\ B_L^T & 0 \end{array}\right] \left[\begin{array}{cc} U_L & U_L \\ V_L & -V_L \end{array}\right] = \left[\begin{array}{cc} M_L & 0 \\ 0 & N_L \end{array}\right] \left[\begin{array}{cc} U_L & U_L \\ V_L & -V_L \end{array}\right] \left[\begin{array}{cc} \Sigma_L & 0 \\ 0 & -\Sigma_L \end{array}\right]. 
\end{equation} 
As long as all $P_l$ and $Q_l$ have full rank, which is always the case in our approach, 
cf.~\eqref{eq:Pstructure}, $M_L$ and $N_L$ are symmetric positive definite and the existence of solutions to \eqref{eq:generalized_eigenvalue} is guaranteed, cf.~\cite{DeSterck2011}.

When the solutions of \eqref{eq:generalized_eigenvalue} are computed, they must be transferred 
back to the finest grid via the interpolation operators, and the quality of approximation is 
improved by applying the smoothing iteration. To achieve better results, smoothing is not done 
with respect to homogeneous equations, but with respect to the inhomogeneous equations
 $B_l v_l = \sigma M_l u_l$ (smoothing for $v_l$) and $B_l^T u_l = \sigma N_l v_l$ (for $u_l$) resulting from \eqref{eq:singular_vectors_coarsest}. 
After a smoothing step for each of $u_l$ and $v_l$, the subsequent smoothing step uses the smoothed values for $v_l$ and $u_l$ in the right hand sides together with an updated value for the approximate singular value 
\begin{equation}\label{eq:singular_value}
\sigma = \frac{u_l^T B_l v_l}{(u_l^TM_lu_l)^{1/2}(v_l^TN_lv_l)^{1/2}}.
\end{equation}
This update for $\sigma$ can be regarded as a generalized Rayleigh quotient. It yields the exact singular value if $u_l$ and $v_l$ are exact generalized singular vectors of $B_l$ in the sense of \eqref{eq:singular_vectors_coarsest}.

To start the above procedure, preliminary interpolation and restriction operators are needed. 
%To this end, $2r$ arbitrary, e.g., random, vectors can be chosen. To make these vectors more smooth and therefore better test vectors, we then apply some iterations of the smoothing scheme (to the homogeneous equation) first. 
To get some first information, a few smoothing iterations are applied to the two systems
$Bx=0 \text{ and }  B^Tx=0$
to $r$ independent, random initial vectors for each of the matrices, $B$ and $B^T$. The resulting vectors are then smooth enough to allow for the construction of a first multigrid hierarchy when used as test vectors. This first hierarchy can then be used to produce better approximations to the singular vectors to the $r$ smallest singular vectors of $B$, which can then in turn be used as new test vectors to construct an even better multigrid hierarchy. In this manner, the method continuously improves itself, which explains why it is referred to as a bootstrap method. We summarize the first cycle of the bootstrap setup in Algorithm~\ref{alg:bamg_setup}, where we
use the notation $\mathcal{S}_l$ and $\mathcal{S}_l^T$ to denote the smoothing iteration 
\eqref{smoothing:eq} for $B_l$ and $B_l^T$, respectively, on level $l$. 
Subsequent cycles proceed in the same manner except that the first smoothing in lines~\ref{smooth1:line}-\ref{smooth2:line} then uses the information which is already available, i.e.\ it proceeds as in lines~\ref{smooth3:line}-\ref{smooth4:line}.

\begin{algorithm}[h]
\DontPrintSemicolon
\SetKwFor{For}{for}{}{end}
\SetArgSty{}
\KwIn{$B_l$ $(B_0 = B)$, $M_l$ $(M_0 = I)$, $N_l$ $(N_0 = I)$, test vectors $\mathcal{U}_l, \mathcal{V}_l)$}
\KwOut{Triplets $(\Sigma_l, \mathcal{U}_l, \mathcal{V}_l)$ of approximate singular vectors and values of $B$}
\eIf{$l = L$ (coarsest grid is reached)}{
	  Determine the $r$ smallest generalized singular triplets $(\Sigma_l, \mathcal{U}_l, \mathcal{V}_l)$ of \eqref{eq:generalized_eigenvalue}.
}{
		\For{no.\ of smoothing steps and $k=1,\ldots,r$}{
		$\widetilde{v}_l^{(k)} = \mathcal{S}_l\big(v_l^{(k)},0\big)$. \enspace % k = 1,\dots,r$\;
		$\widetilde{u}_l^{(k)} = \mathcal{S}^T_l\big(u_l^{(k)},0\big)$. \label{smooth1:line}  \;
                ${v}_l^{(k)} = \widetilde{v}_l^{(k)}, \enspace {u}_l^{(k)} = \widetilde{u}_l^{(k)}$. \label{smooth2:line}\;
                }
		\For{$m = 1,\dots,\mu$}{
			Compute the rows of $P_l$ and columns of $Q_l$ via \eqref{eq:ls_interpolation}. \;
			Compute $B_{l+1} = Q_lB_lP_l$.\;
		        Compute $M_{l+1} = Q_lM_lQ_l^T$.\;
		        Compute $N_{l+1} = P_l^TN_lP_l$.\;
			Set $\mathcal{U}_{l+1} = \{R_lu_l^{(k)} : k = 1,\dots,r\}$.\;
			Set $\mathcal{V}_{l+1} = \{R_lv_l^{(k)} : k = 1,\dots,r\}$.\;
			$(\Sigma_{l+1}, \mathcal{U}_{l+1}, \mathcal{V}_{l+1})$ = bamg\_mle$(B_{l+1}, M_{l+1}, N_{l+1}, \mathcal{U}_{l+1}, \mathcal{V}_{l+1}))$.\;
			Set $(\Sigma_{l+1}, \mathcal{U}_{l+1}, \mathcal{V}_{l+1}) = \{(\sigma_{l+1}^{(k)}, Q_l^Tu_{l+1}^{(k)}, P_lv_{l+1}^{(k)}) : k = 1,\dots,k_v\}$.\;
			\For{no.\ of smoothing steps and $k=1,\ldots,r$}{
			    $\widetilde{v}_l^{(k)} = \mathcal{S}_l\big(v_l^{(k)},\sigma_l^{(k)} M_l u_l^{(k)}\big)$. \enspace % k = 1,\dots,r$\;
			    $\widetilde{u}_l^{(k)} = \mathcal{S}^T_l\big(u_l^{(k)},\sigma_l^{(k)} N_l v_l^{(k)}\big)$. \label{smooth3:line}\;
                            $v_l^{(k)} = \widetilde{v}_l^{(k)}, \enspace u_l^{(k)} = \widetilde{u}_l^{(k)}, \enspace \sigma_l^{(k)} = \frac{(u_l^{(k)})^T B_l v_l^{(k)}}{((u_l^{(k)})^TM_lu_l^{(k)})^{1/2}((v_l^{(k)}l)^TN_lv_l^{(k)})^{1/2}}$. \label{smooth4:line} \;
                        } 
		}
}

\caption{bamg\_mle (BAMG setup, first cycle)\label{alg:bamg_setup}}
\end{algorithm}

\section{Application of BAMG to Markov chains}\label{sec:markovbamg}
We now go into detail concerning the application of BAMG to computing the steady state vector of a discrete Markov chain. We are interested in solving a linear system with the matrix
\begin{equation}\label{eq:BIA}
B = I-A,
\end{equation}
where $A$ is the transition matrix of a Markov chain and therefore column stochastic, implying
that all column sums of $B$ equal zero, i.e., $\mathbf{1}^T B = 0$ with $\one =
(1,\ldots,1)^T$. Since we also assume that $A$ and thus $B$ is irreducible, the left and right nullspaces of $B$ both have dimension one. Nullspaces are identical to the spaces spanned by the singular vectors for the singular value 0. In other words: we know the left singular vector to the smallest singular value, $\sigma_n = 0$, which is $\one$,
and we want to compute the unknown corresponding right singular vector $x= v_n.$ The bootstrap construction of test vectors of section~\ref{bootstrap:subsec} will compute an approximation to
$v_n$ together with approximations to further left and right singular vectors belonging to small
singular values. 

%For Markov chains, we have the additional information that $\one$ is a left singular vector for the singular value 0, and we should take advantage of that information in the bootstrap process.  
It is reasonable to try to take advantage of the additional information that $\one$ is a left singular vector for the singular value 0.  
We do so by preserving the property $\mathbf{1}^TB = 0$ on the coarser levels. On all levels we then know the left singular vector for the smallest singular value exactly.
%, and it will also be important in the convergence analysis of the preconditioned GMRES method developed in the next section. 
%For an arbitrary choice of $Q$ we will not necessarily have $\mathbf{1}^{T}B_{c} = 0$ for $B_c = QBP$. 
To achieve this, we use the exact singular vector $\mathbf{1}$ as a test vector within the least squares process (\ref{eq:ls_interpolation}) to define $Q$. For $u^{(1)} = \mathbf{1}$ this results in the weight $\omega_1 = \infty$ with respect to $B^T$ according to \eqref{eq:ls_weights}, %: As the weights of the test vectors are determined with respect to $B^T$ we get for $u^{(1)} = \mathbf{1}$
\begin{equation}\label{eq:weight_infinity}
\omega_1 = \frac{1}{\|B^T\mathbf{1}\|^2} = \frac{1}{0} = \infty.
\end{equation}
Therefore, the least squares functional $\mathcal{L}$ will attain the value $\infty$ as long as the vector $\mathbf{1}$ is not interpolated exactly. As a modification to the bootstrap process for Markov chains we thus require that $\mathbf{1}$ be interpolated exactly and set $0\cdot \infty = 0$, so that \eqref{eq:ls_interpolation} turns into a least squares problem for the other test vectors $u^{(2)},\dots,u^{(r)}$, subject to the restriction $Q_i^T\mathbf{1} = \mathbf{1}$ for all columns $i$ of $Q$. Restricted least squares problems of this kind can easily be solved by known techniques or transformed into standard least squares problems, see~\cite{GolubVanLoan1996}. Overall, the restrictions for the individual columns of $Q$ imply that the computed restriction operator will satisfy $\mathbf{1}^TQ = \mathbf{1}^T$, leading to $\one^TB_c = \mathbf{1}^TQBP = \mathbf{1}^T BP = 0$. In our numerical experiments we noticed that preserving the constant in this manner improves the quality of the coarse grid operators when compared to the simple averaging (the non-zero structure of the restriction operator $Q$ is chosen as the transpose of the non-zero structure of $P$ with all entries in one column set to $\frac{1}{c}$, where $c$ is the number of non-zero entries in this column) from \cite{BoltenBrandtBrannickFrommerKahlLivshits2011,DeSterckManteuffelMcCormickNguyenRuge2008,DeSterckManteuffelMcCormickRugeMillerPearsonSanders2010,DeSterckMillerSandersWinlaw2010}, another 
attempt to preserve structural properties for the coarse grid operators.

\section{BAMG-preconditioned GMRES}\label{sec:preconditioned_gmres}
Each setup cycle, as described in section \ref{sec:bamg}, involves recomputing the operators $P$, 
$Q$, and then $B_c = QAP$, and is therefore computationally quite costly.  As demonstrated  
in~\cite{BoltenBrandtBrannickFrommerKahlLivshits2011} and then also later explored in~\cite{DeSterck2011}, the use of a second, {\em additive} phase---where one iterates with a fixed multigrid hierarchy to improve the quality of the 
approximation to the state vector obtained so far---can result in substantial speed up.
%The basic idea is to use the existing multigrid hierarchy to formulate a preconditioner
%for the GMRES iteration~\cite{SaadSchultz1986}, as in~\cite{BoltenBrandtBrannickFrommerKahlLivshits2011,Virnik2007}. 
This idea is motivated by the fact 
that the multigrid hierarchy developed by the setup cycles
approximates a rich subspace spanned by singular 
vectors of $B$ with small singular values (in addition 
to the one with singular value $0$, the state vector) which makes the resulting 
hierarchy well suited for forming an effective preconditioner
for GMRES.  Specifically, let $E_0 = I - \widetilde{C}B$ denote 
the two-grid error propagation 
operator \eqref{eq:error_propagator_twogrid}. 
Then, given the approximation $x^{(0)}$ to the steady state vector $x$, 
computed in the setup phase, 
and an arbitrary initial guess $e^{(0)}$ for the residual equation 
\begin{equation} \label{residual:eq}
 Be = -Bx^{(0)},
\end{equation}
one application of the multigrid V-cycle preconditioner yields a new iterate
\begin{equation}\label{eq:iterate_preconditioner}
e^{(1)} = (I-\widetilde{C}B)e^{(0)} - \widetilde{C}Bx^{(0)}.
\end{equation}
Thus, if we start with initial guess $e^{(0)} = 0$, one multigrid iteration corresponds to a multiplication with the implicitly defined preconditioner $C = \widetilde{C}$, allowing us to 
use GMRES on the preconditioned residual equation
\begin{equation} \label{prec_residual:eq}
CBx = -CBx^{(0)}.
\end{equation} 

For singular linear systems, convergence of the GMRES iteration can in general not be guaranteed, 
even when the linear system is consistent. 
The following theorem on the convergence of GMRES for singular systems summarizes results from~\cite{IpsenMeyer98}. Recall that $B$ is said to have index 1 if $\range(B) \cap \nullsp(B) = 0$.

\begin{theorem} \label{sing_GMRES_conv:thm} Assume that the matrix $B$ is singular and has index 1.
Then the GMRES iterates for the singular system $Bx = b$ with $b \in \range(B)$ converge
towards the solution $x = x^{(0)} + B^D(b-Bx^{(0)})$, where $x^{(0)}$ is the 
starting vector and $B^D$
denotes the Drazin inverse of $B$.
\end{theorem}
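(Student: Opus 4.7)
The plan is to exploit the index-one hypothesis, which yields the direct sum decomposition $\mathC^n = \range(B) \oplus \nullsp(B)$, with $B$ leaving $\range(B)$ invariant and acting invertibly there (its kernel in $\range(B)$ is $\range(B) \cap \nullsp(B) = \{0\}$). The argument then reduces GMRES on the singular system to GMRES on the nonsingular restriction $B|_{\range(B)}$, for which finite-step termination to the unique solution is standard.

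First I would verify that every GMRES iterate stays in a shifted copy of $\range(B)$. Since $b \in \range(B)$ by assumption, the initial residual $r^{(0)} = b - Bx^{(0)}$ lies in $\range(B)$. Because $B$ maps $\range(B)$ to itself, induction gives $B^j r^{(0)} \in \range(B)$ for all $j \geq 0$, hence $\mathcal{K}_k(B, r^{(0)}) \subseteq \range(B)$ for every $k$. Consequently the GMRES correction $\Delta x_k \in \mathcal{K}_k(B, r^{(0)})$ and its image $B\Delta x_k$ both lie in $\range(B)$.

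Next I would identify the GMRES minimization with an ordinary nonsingular GMRES problem. The residual-minimizing step at iteration $k$ solves
\begin{equation}
\min_{\Delta x_k \in \mathcal{K}_k(B, r^{(0)})} \|r^{(0)} - B\Delta x_k\|,
\end{equation}
and by the previous paragraph this is exactly GMRES applied to the operator $\hat B := B|_{\range(B)}$ on the invariant subspace $\range(B)$, started from $\Delta x_0 = 0$, for the right-hand side $r^{(0)}$. Since $\hat B$ is invertible on $\range(B)$, the classical convergence theory of GMRES guarantees that the iterates converge (in at most $\dim \range(B)$ steps) to the unique vector $\Delta x^\star \in \range(B)$ satisfying $\hat B \Delta x^\star = r^{(0)}$.

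Finally I would identify this limit as $B^D r^{(0)}$, completing the proof. Under index one, the Drazin inverse coincides with the group inverse: $B^D$ vanishes on $\nullsp(B)$, leaves $\range(B)$ invariant, and satisfies $B B^D = B^D B = \Pi_{\range(B)}$, the oblique projector onto $\range(B)$ along $\nullsp(B)$. Therefore $B^D r^{(0)} \in \range(B)$ and $B B^D r^{(0)} = \Pi_{\range(B)} r^{(0)} = r^{(0)}$, so $B^D r^{(0)}$ is precisely the unique solution $\Delta x^\star \in \range(B)$ of $\hat B \Delta x = r^{(0)}$. Hence the limit iterate is $x = x^{(0)} + B^D(b - Bx^{(0)})$ as claimed. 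The one subtlety to handle carefully is that for general singular systems GMRES can stagnate when the null space components leak into the Krylov space via consecutive powers of $B$; the index-one assumption is exactly what prevents this, since it forces $B$ to act bijectively on its own range so that no spurious null-space directions are generated.
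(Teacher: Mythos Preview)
Your argument is correct. The paper itself does not supply a proof of this theorem; it merely states that the result ``summarizes results from~\cite{IpsenMeyer98}'' and then applies it. So there is no in-paper proof to compare against.

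That said, your route is precisely the standard one underlying the cited result: the index-one hypothesis gives the direct-sum splitting $\mathC^n = \range(B) \oplus \nullsp(B)$ with $B$ acting invertibly on $\range(B)$; the consistency assumption $b \in \range(B)$ forces $r^{(0)}$ and hence the entire Krylov space into $\range(B)$; and GMRES then coincides with nonsingular GMRES for $B|_{\range(B)}$, which terminates at the unique solution in $\range(B)$, namely $B^D r^{(0)}$. Your remark that the index-one condition is exactly what rules out breakdown is also the key point emphasized in the Ipsen--Meyer analysis. Nothing is missing.
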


For the matrix $B = I-A$ with $A$ the transition matrix of an irreducible Markov chain, we have $\nullsp(B) = \left\langle x \right\rangle$ and $\range(B) = \left\langle \mathbf{1} \right\rangle^\perp$ where $x$ is the steady state vector of the Markov chain. Because all components of $x$ are strictly positive, $x$ can not be orthogonal to the constant vector $\mathbf{1}$ and therefore $B$ has index 1. Since the residual equation 
\eqref{residual:eq} is consistent, Theorem~\ref{sing_GMRES_conv:thm} gurantees the convergence of the GMRES iteration.

It is not immediately clear that the preconditioned matrix $CB$ also has index 1, since $CB$ in general will not have column sums all equal to zero, so that we will no longer have that $\range(CB)  = \left\langle \mathbf{1} \right\rangle^\perp$. 

However in all our numerical experiments from section~\ref{sec:examples} we never experienced a breakdown of GMRES, indicating that it is highly unlikely in practice that $CB$ has index $> 1$. We proceed by formulating results which, to a certain extent, quantify the acceleration of convergence due to the preconditioning.
Theorem~\ref{starke:thm} below recalls a result from \cite{EiermannErnst2001} and \cite{Starke97} on the convergence of GMRES in the non-singular case. It uses the field of values $\mathcal{F}(B)$ of $B$ defined as
\[
   \mathcal{F}(B) = \{ \langle Bx,x\rangle: \langle x, x\rangle = 1, x \in \mathC^n \}.
\]  
$\mathcal{F}(B)$ is a compact set containing all the eigenvalues of $B$, see \cite{HornJohnson:topics}. We denote by $\nu(\mathcal{F}(B))$ the distance of the field of values to the origin, i.e.\
\[
\nu(\mathcal{F}(B))   = \min_{z \in \mathcal{F}(B)} \|z\|.
\]

\begin{theorem} \label{starke:thm} The residuals $r_k = b-Bx_k$ of the GMRES iterates $x_k$ satisfy
\begin{equation} \label{starke_est:eq}
\frac{\|r_k\|}{\|r_0\|} \leq \left( 1-\nu(\mathcal{F}(B)) \nu(\mathcal{F}(B^{-1}))\right)^{k/2}. 
\end{equation}
\end{theorem}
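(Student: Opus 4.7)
The plan is to reduce the statement, via the standard GMRES optimality property, to a single-step residual minimization and then invoke two field-of-values estimates. By the variational characterization of GMRES, $\|r_k\| = \min \|p(B) r_0\|$, where the minimum is taken over polynomials $p$ of degree at most $k$ with $p(0)=1$. I would begin by recording this fact and exploiting it to bound $\|r_k\|$ in terms of $\|r_{k-1}\|$: multiplying the optimal degree-$(k-1)$ residual polynomial by any factor $(1-\alpha z)$ produces a polynomial admissible for step $k$, and since the resulting factors commute we obtain
\[
\|r_k\| \leq \|(I-\alpha B) r_{k-1}\| \quad \text{for all } \alpha \in \mathbb{C}.
\]

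Next, I would minimize the right-hand side over $\alpha$. This is a one-dimensional least-squares problem whose solution $\alpha^\ast = \langle r_{k-1}, B r_{k-1}\rangle / \|B r_{k-1}\|^2$ yields the closed form
\[
\min_\alpha \|(I-\alpha B) v\|^2 = \|v\|^2 - \frac{|\langle Bv, v\rangle|^2}{\|Bv\|^2}
\]
with $v = r_{k-1}$. The crucial remaining step is to bound the ratio $|\langle Bv,v\rangle|^2 / (\|v\|^2 \|Bv\|^2)$ from below by the product $\nu(\mathcal{F}(B))\,\nu(\mathcal{F}(B^{-1}))$. For this I would split the ratio as
\[
\frac{|\langle Bv, v\rangle|^2}{\|v\|^2 \|Bv\|^2} = \frac{|\langle Bv, v\rangle|}{\|v\|^2} \cdot \frac{|\langle Bv, v\rangle|}{\|Bv\|^2}.
\]
The first factor equals $|\langle B\hat v, \hat v\rangle|$ with $\hat v = v/\|v\|$ and is therefore at least $\nu(\mathcal{F}(B))$. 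For the second factor, the substitution $u = Bv$ gives $\langle Bv, v\rangle = \langle u, B^{-1} u\rangle$, so the factor equals $|\langle B^{-1}\hat u, \hat u\rangle|$ with $\hat u = u/\|u\|$, which is at least $\nu(\mathcal{F}(B^{-1}))$.

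Combining these observations delivers the per-step contraction
\[
\|r_k\|^2 \leq \bigl(1 - \nu(\mathcal{F}(B))\,\nu(\mathcal{F}(B^{-1}))\bigr)\, \|r_{k-1}\|^2,
\]
and iterating this bound from $k$ down to $0$ produces the stated geometric decay. The main obstacle, and the conceptual heart of the argument, is the factorization in the previous paragraph: writing $|\langle Bv,v\rangle|^2$ as a product of two copies of $|\langle Bv,v\rangle|$, each normalized differently so that one is recognized as a value of the Rayleigh-type functional for $B$ and the other, after the substitution $u=Bv$, as a value of the corresponding functional for $B^{-1}$. A tacit hypothesis throughout is the invertibility of $B$, since $\mathcal{F}(B^{-1})$ must be well defined and the substitution $u=Bv$ must be invertible; this is consistent with the fact that Theorem~\ref{starke:thm} addresses the non-singular case, in contrast to Theorem~\ref{sing_GMRES_conv:thm}.
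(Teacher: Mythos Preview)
The paper does not give a proof of this theorem; it is quoted as a known result from \cite{EiermannErnst2001} and \cite{Starke97}, so there is no in-paper argument to compare against. Your argument is correct and is in fact essentially the proof found in those references: the one-step GMRES optimality bound $\|r_k\|\le\min_\alpha\|(I-\alpha B)r_{k-1}\|$, the explicit minimization $\min_\alpha\|(I-\alpha B)v\|^2=\|v\|^2-|\langle Bv,v\rangle|^2/\|Bv\|^2$, and the key splitting of $|\langle Bv,v\rangle|^2/(\|v\|^2\|Bv\|^2)$ into two Rayleigh-type factors bounded below by $\nu(\mathcal{F}(B))$ and $\nu(\mathcal{F}(B^{-1}))$ via the substitution $u=Bv$. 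Your closing remark that invertibility of $B$ is tacitly assumed is also consistent with how the paper uses the result, namely only for the projected nonsingular matrix $\widehat{B}$ in Theorem~\ref{sing_gmres2:thm}.
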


It is easy to see that $\nu(\mathcal{F}(B)) > 0 $ if and only if $\nu(\mathcal{F}(B^{-1})) > 0 $,
and that $\nu(\mathcal{F}(B)) \cdot \nu(\mathcal{F}(B^{-1}))<1.$
Thus the theorem predicts progress in the GMRES iteration in every step provided  
$\nu(\mathcal{F}(B)) > 0 $, and the larger $\nu(\mathcal{F}(B)) \cdot \nu(\mathcal{F}(B^{-1}))$ is, the faster the predicted progress is. It has been shown in 
\cite{EiermannErnst2001,LiesenTichy12} that \eqref{starke_est:eq} is an improvement over the estimate from \cite{Elman82} (see also \cite{Saad1996,SaadSchultz1986})
\begin{equation} \label{elman_bound:eq}
 \frac{\|r_k\|}{\|r_0\|} \leq \left(1-(\nu(\mathcal{F}(B))/\|B\|)^2 \right)^{k/2}
\end{equation}
in the sense that $1-\nu(\mathcal{F}(B)) \nu(\mathcal{F}(B^{-1}) \leq 1-(\nu(\mathcal{F}(B))/\|B\|)^2$.

In order to, at least partly, explain the fast convergence of the BAMG-precon\-di\-tio\-ned GMRES iteration we now relate the GMRES iteration for the singular system to one for a
non-singular system of smaller dimension via the following theorem based on \cite[Thm.2.9]{HayamiSugihara2011}.

\begin{theorem} \label{sing_gmres2:thm} Assuming the singular matrix $B$ has index 1 and $\dim(\range(B)) = m < n$. Let $\Pi \in \mathR^{n \times m}$ have orthonormal columns which span $\range(B)$ and define
\[
\widehat{B} = \Pi^TB\Pi \in \mathR^{m \times m}.
\]
Assume that $b \in  \range(B)$ and consider the GMRES iterates $x_k$ for the system $Bx = b$ with starting vector $x_0.$ Decompose $x_0 = x_0^1 + x_0^2$ with 
$x_0^1 \in \range(B), x_0^2 \in \nullsp(B)$ and consider the GMRES iterates for the system $\widehat{B}\widehat{x} = \widehat{b}$ with $\widehat{b} = \Pi^Tb$ and $\widehat{x}_0 = \Pi^Tx_0^1$.
Then we have
\begin{equation} \label{sing_nonsing_rel:eq}
x_k = x_0^2 + \Pi\widehat{x}_k \mbox{ and } r_k = b-Bx_k = \Pi(\widehat{B}(\widehat{b}-\widehat{x}_k) = \Pi\widehat{r}_k.
\end{equation}
In particular, if $\nu(\fov{\widehat{B}}) > 0 $ we have
\[
\frac{\|r_k\|}{\|r_0\|} \leq \left(1-\nu(\mathcal{F}(\widehat{B})) \nu(\mathcal{F}(\widehat{B}^{-1})) \right)^{k/2}.
\]
\end{theorem}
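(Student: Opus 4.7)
The plan is to reduce the singular GMRES iteration to a non-singular one of dimension $m$ by exploiting the index 1 decomposition $\mathR^n = \range(B) \oplus \nullsp(B)$, and then apply Theorem~\ref{starke:thm} to the reduced matrix $\widehat{B}$. The key structural observation is the intertwining relation $B\Pi = \Pi\widehat{B}$: since each column of $B\Pi$ lies in $\range(B) = \range(\Pi)$, we can write $B\Pi = \Pi M$ for a unique $M \in \mathR^{m\times m}$, and multiplying by $\Pi^T$ on the left gives $M = \Pi^T B\Pi = \widehat{B}$. Moreover $\widehat{B}$ is non-singular because the index 1 condition forces $B$ to be a bijection from $\range(B)$ onto $\range(B)$.

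Next I would show that the GMRES residual starts inside $\range(B)$ and lifts properly. Since $x_0^1 \in \range(B) = \range(\Pi)$ and $\Pi$ has orthonormal columns, $x_0^1 = \Pi\Pi^Tx_0^1 = \Pi\widehat{x}_0$; combined with $Bx_0^2=0$ and $b=\Pi\widehat{b}$ this gives
\begin{equation}
r_0 = b-Bx_0 = \Pi\widehat{b} - B\Pi\widehat{x}_0 = \Pi(\widehat{b}-\widehat{B}\widehat{x}_0) = \Pi\widehat{r}_0.
\end{equation}
Iterating the intertwining relation, $B^j r_0 = \Pi \widehat{B}^j \widehat{r}_0$ for every $j\geq 0$, so
\begin{equation}
K_k(B,r_0) = \Pi\, K_k(\widehat{B},\widehat{r}_0).
\end{equation}

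Now I would translate the GMRES minimization. Every candidate iterate has the form $x_0 + v$ with $v \in K_k(B,r_0)$, i.e.\ $v = \Pi\widehat{v}$ with $\widehat{v} \in K_k(\widehat{B},\widehat{r}_0)$. Using $x_0 = x_0^2 + \Pi\widehat{x}_0$,
\begin{equation}
b-B(x_0+v) = \Pi\widehat{b} - B\Pi(\widehat{x}_0+\widehat{v}) = \Pi\bigl(\widehat{b}-\widehat{B}(\widehat{x}_0+\widehat{v})\bigr),
\end{equation}
and since $\Pi$ has orthonormal columns, $\|b-B(x_0+v)\|_2 = \|\widehat{b}-\widehat{B}(\widehat{x}_0+\widehat{v})\|_2$. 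Hence the GMRES minimizer over $x_0 + K_k(B,r_0)$ corresponds one-to-one, via $v = \Pi\widehat{v}$, to the GMRES minimizer over $\widehat{x}_0+K_k(\widehat{B},\widehat{r}_0)$. This yields $x_k = x_0^2 + \Pi\widehat{x}_k$ and $r_k = \Pi\widehat{r}_k$, proving \eqref{sing_nonsing_rel:eq}. The bound on $\|r_k\|/\|r_0\|$ is then immediate: $\|r_k\| = \|\widehat{r}_k\|$ and $\|r_0\| = \|\widehat{r}_0\|$ because $\Pi$ is an isometry, and Theorem~\ref{starke:thm} applied to the non-singular $\widehat{B}$ furnishes the claimed estimate.

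The main obstacle is really the uniqueness step: one must justify why GMRES necessarily stays on the subspace $x_0^2 + \range(\Pi)$, and this is exactly where the index 1 hypothesis enters. Without index 1 the Krylov subspace $K_k(B,r_0)$ could leak into $\nullsp(B)$, breaking the identification with $K_k(\widehat{B},\widehat{r}_0)$ and invalidating the minimization correspondence. Everything else is bookkeeping around orthonormality of $\Pi$ and the intertwining relation $B\Pi=\Pi\widehat{B}$.
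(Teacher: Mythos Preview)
Your argument is correct and complete. The paper itself does not give a self-contained proof of \eqref{sing_nonsing_rel:eq}: it simply cites \cite{HayamiSugihara2011} for that identity and then invokes Theorem~\ref{starke:thm} for the residual bound. Your write-up reconstructs exactly what that citation delivers---the intertwining $B\Pi=\Pi\widehat{B}$, invertibility of $\widehat{B}$ from the index~1 hypothesis, the Krylov subspace identification $K_k(B,r_0)=\Pi K_k(\widehat{B},\widehat{r}_0)$, and the isometric transfer of the GMRES minimization---so mathematically the two routes coincide; yours is just spelled out in full.

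One small correction to your closing commentary (not to the proof proper): the Krylov subspace cannot ``leak into $\nullsp(B)$'' regardless of the index, because $r_0=b-Bx_0\in\range(B)$ whenever $b\in\range(B)$, and then $B^jr_0\in\range(B)$ for all $j\ge 0$ automatically. The index~1 assumption is used precisely where you actually invoked it in the body of the proof: (i) to guarantee the direct-sum decomposition $\mathR^n=\range(B)\oplus\nullsp(B)$ so that $x_0=x_0^1+x_0^2$ makes sense, and (ii) to conclude that $\widehat{B}$ is non-singular, which is what lets you apply Theorem~\ref{starke:thm} and ensures the GMRES minimizer is unique on the reduced problem.
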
  
\begin{proof}
The equality \eqref{sing_nonsing_rel:eq} was established in \cite{HayamiSugihara2011}. 
The subsequent estimate follows from Theorem~\ref{starke:thm}.
\end{proof}

Using this theorem we can substantiate the experimentally observed fast 
convergence of the BAMG preconditioned GMRES iteration as compared to plain GMRES by comparing the fields of values of the respective projected matrices $\widehat{CB}$ and $\widehat{B}$, respectively.  
Note that since $\langle \widehat{B}y,y \rangle = \langle B\Pi y, \Pi y \rangle$ and $\|\Pi y\| = \| y\|$ we have
\[
\fov{\widehat{B}} = \{ \langle \widehat{B}y,y \rangle: \|y \| = 1, y \in \mathC^m \} =  
\{ \langle Bz,z \rangle: \|z \| = 1, z \in \mathC^n, z = \Pi y\}
\subseteq \fov{B}.
\]

\begin{figure}
\centerline{Uniform two-dimensional network ($n = 1089$):}
\centerline{
\includegraphics[width = 0.3\textwidth]{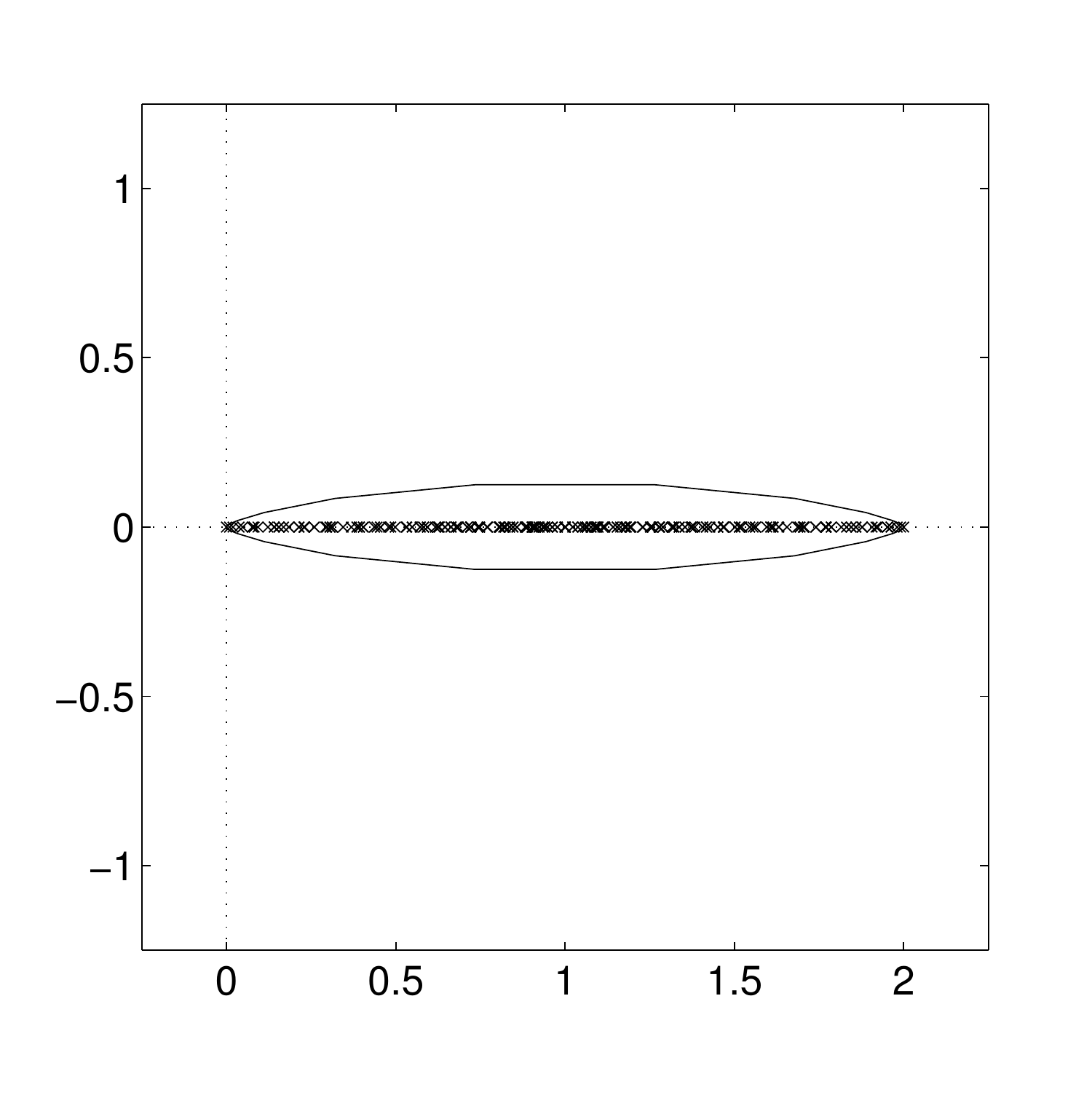}
\includegraphics[width = 0.3\textwidth]{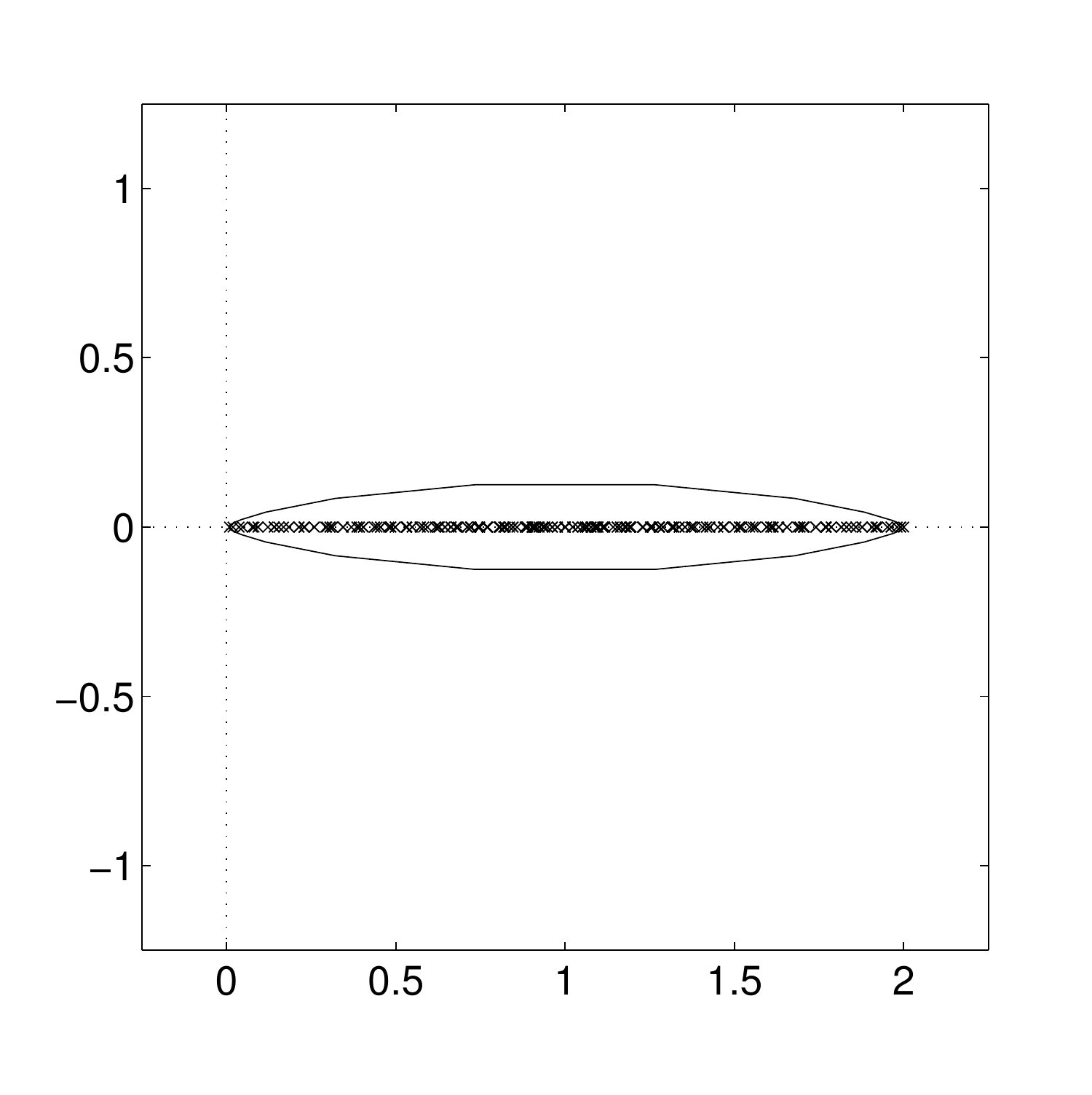}
\includegraphics[width = 0.3\textwidth]{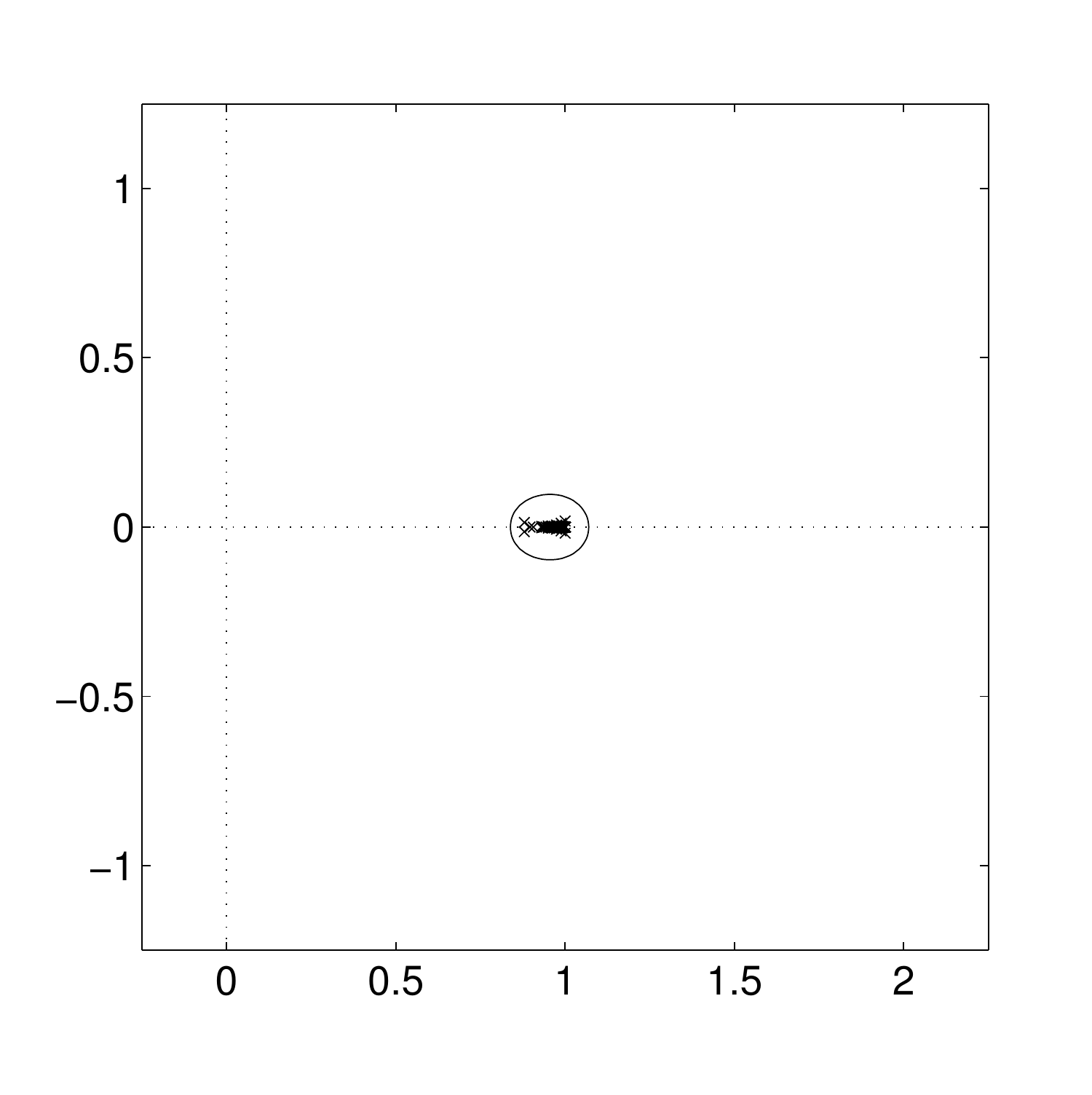}
} 
\centerline{Tandem queuing network ($n = 1089$):}
\centerline{
\includegraphics[width = 0.3\textwidth]{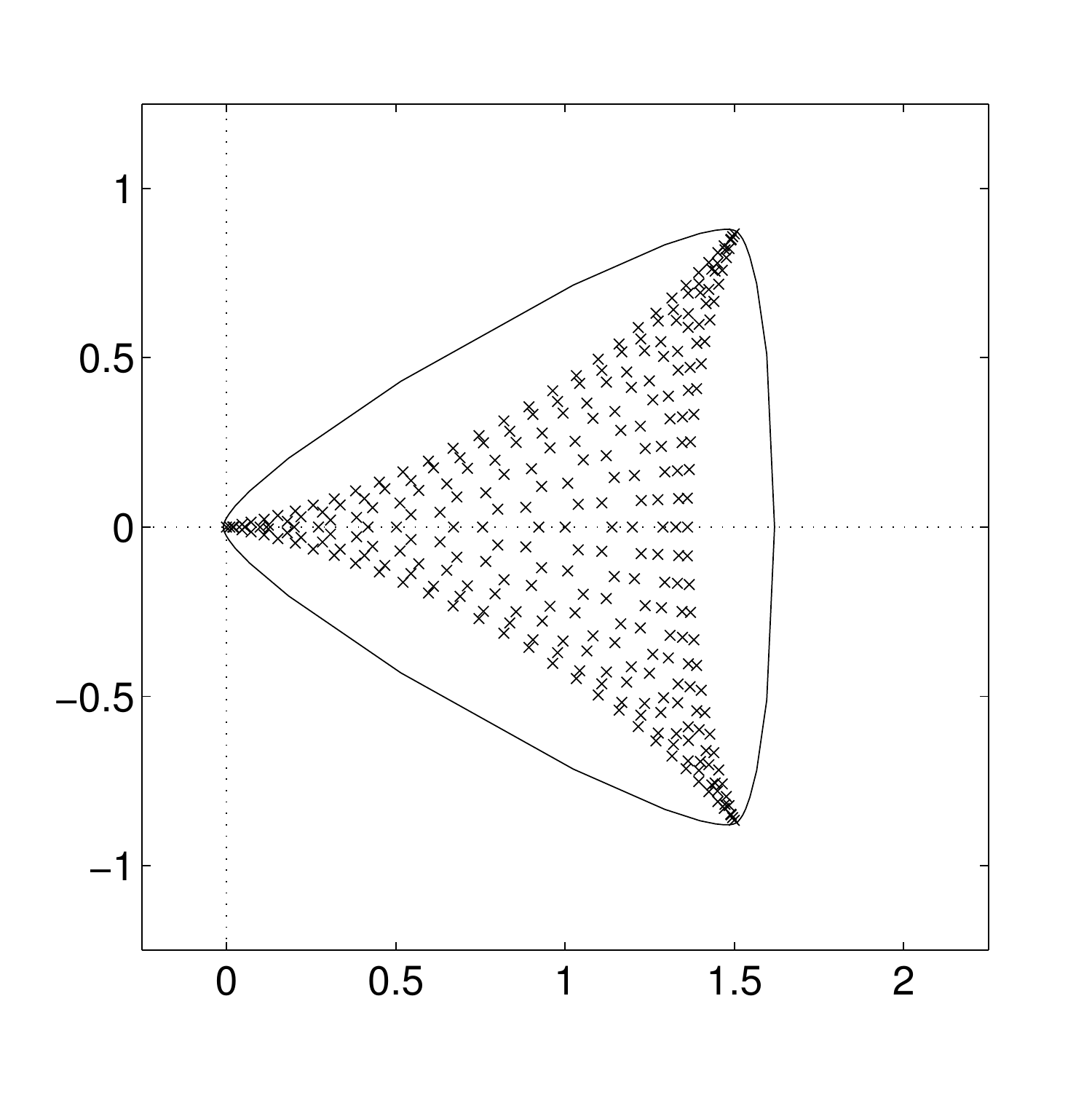}
\includegraphics[width = 0.3\textwidth]{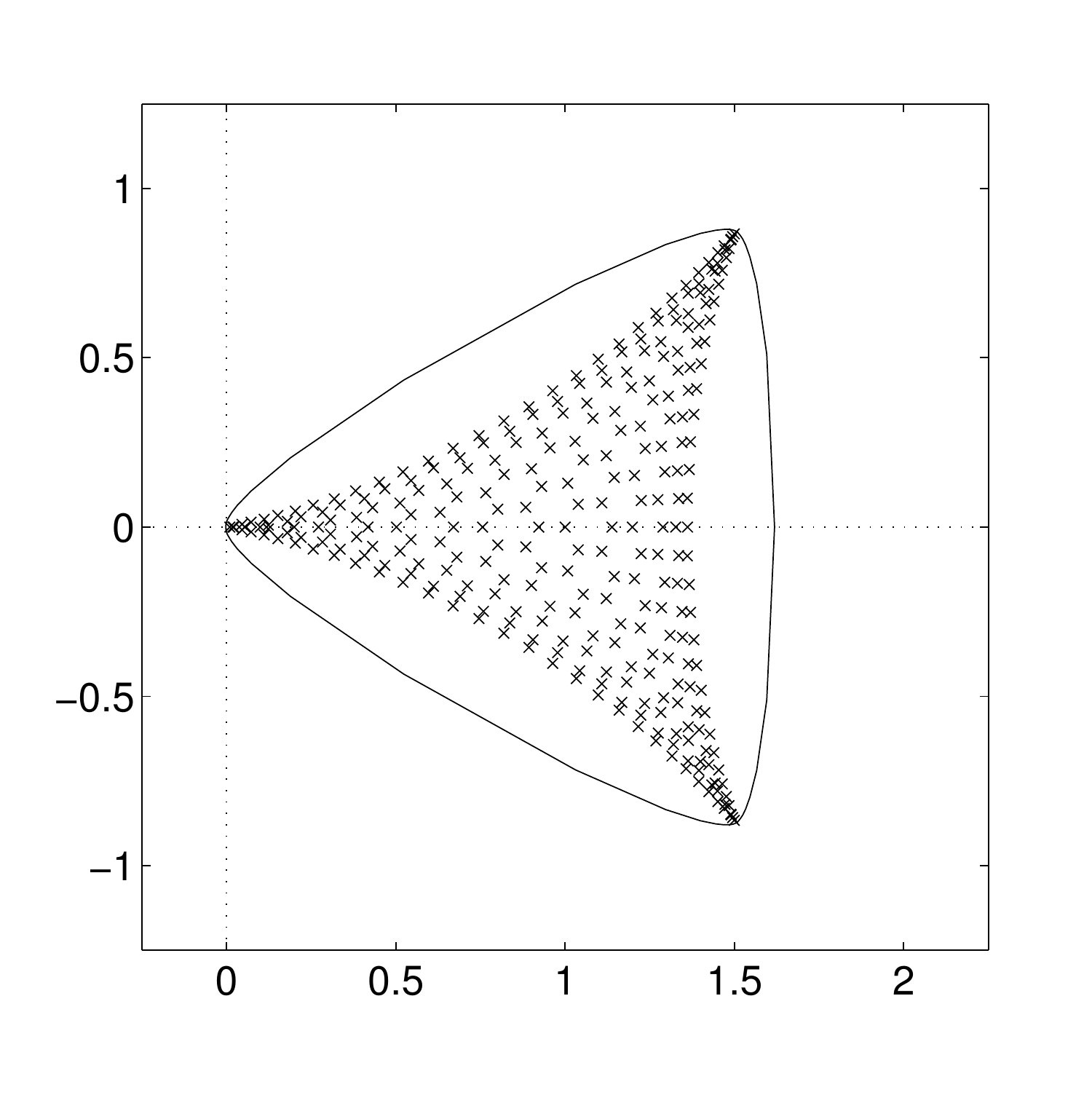}
\includegraphics[width = 0.3\textwidth]{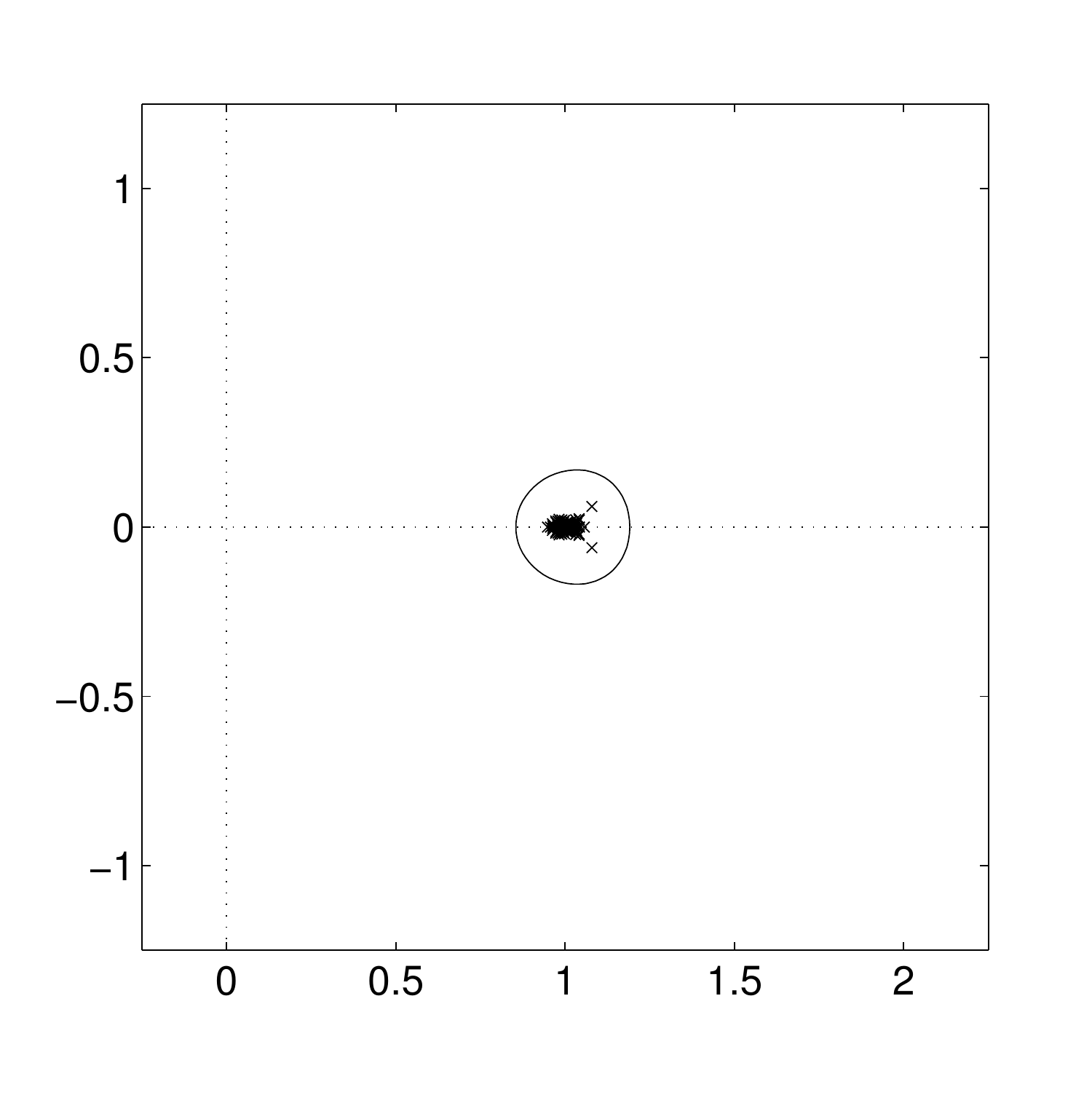}
}
\centerline{Petri net ($n = 506$):}
\centerline{
\includegraphics[width = 0.3\textwidth]{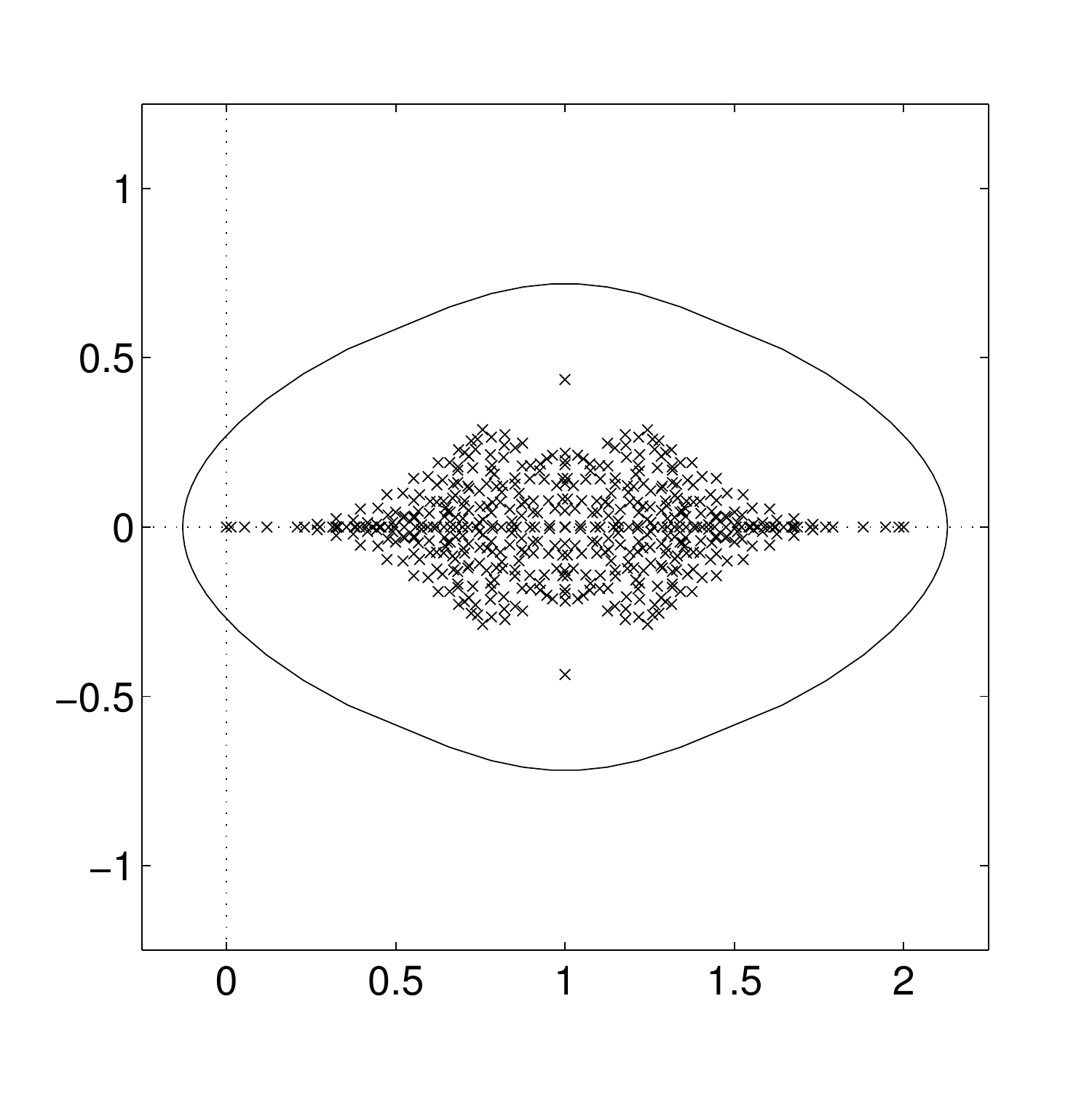}
\includegraphics[width = 0.3\textwidth]{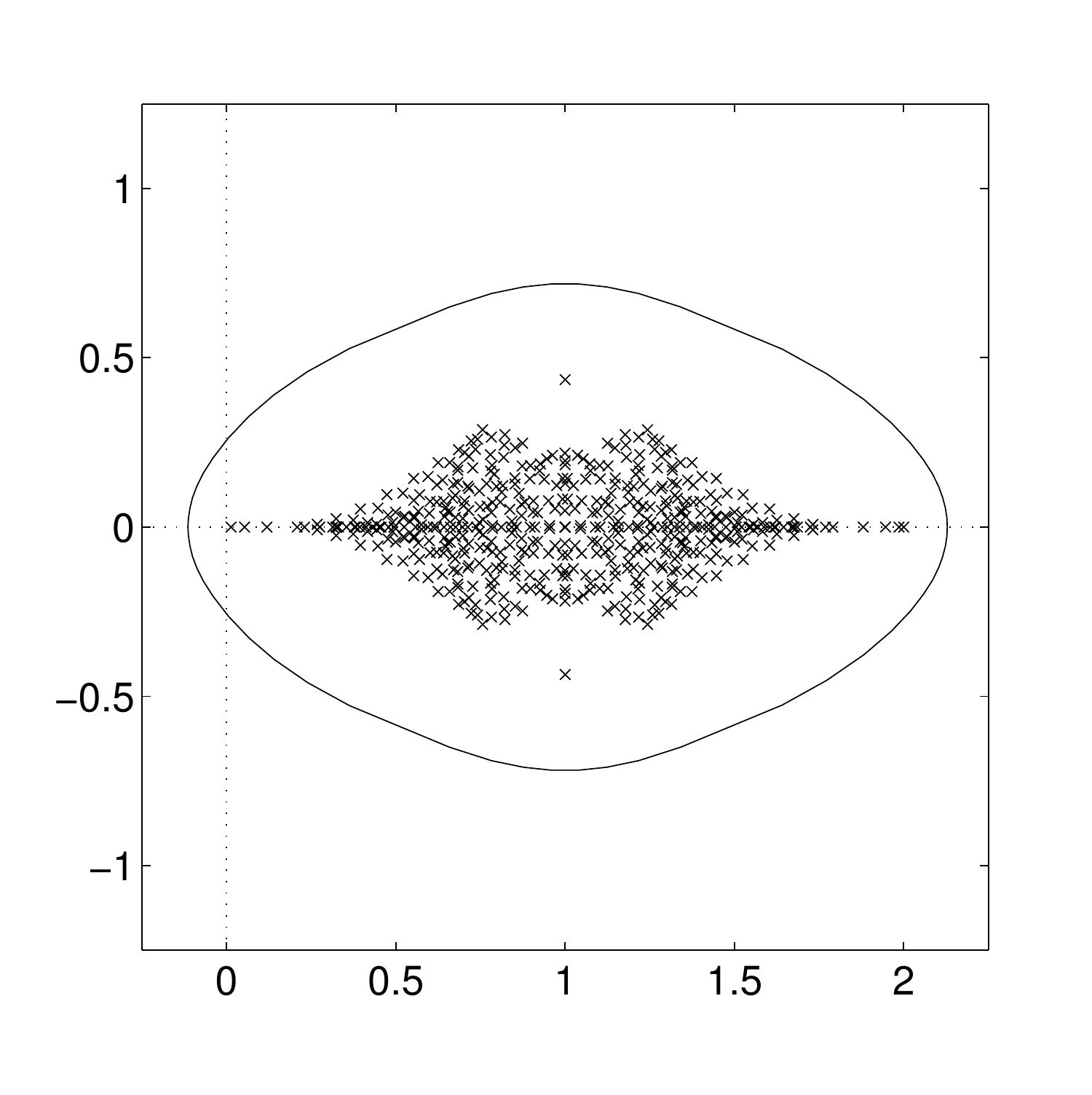}
\includegraphics[width = 0.3\textwidth]{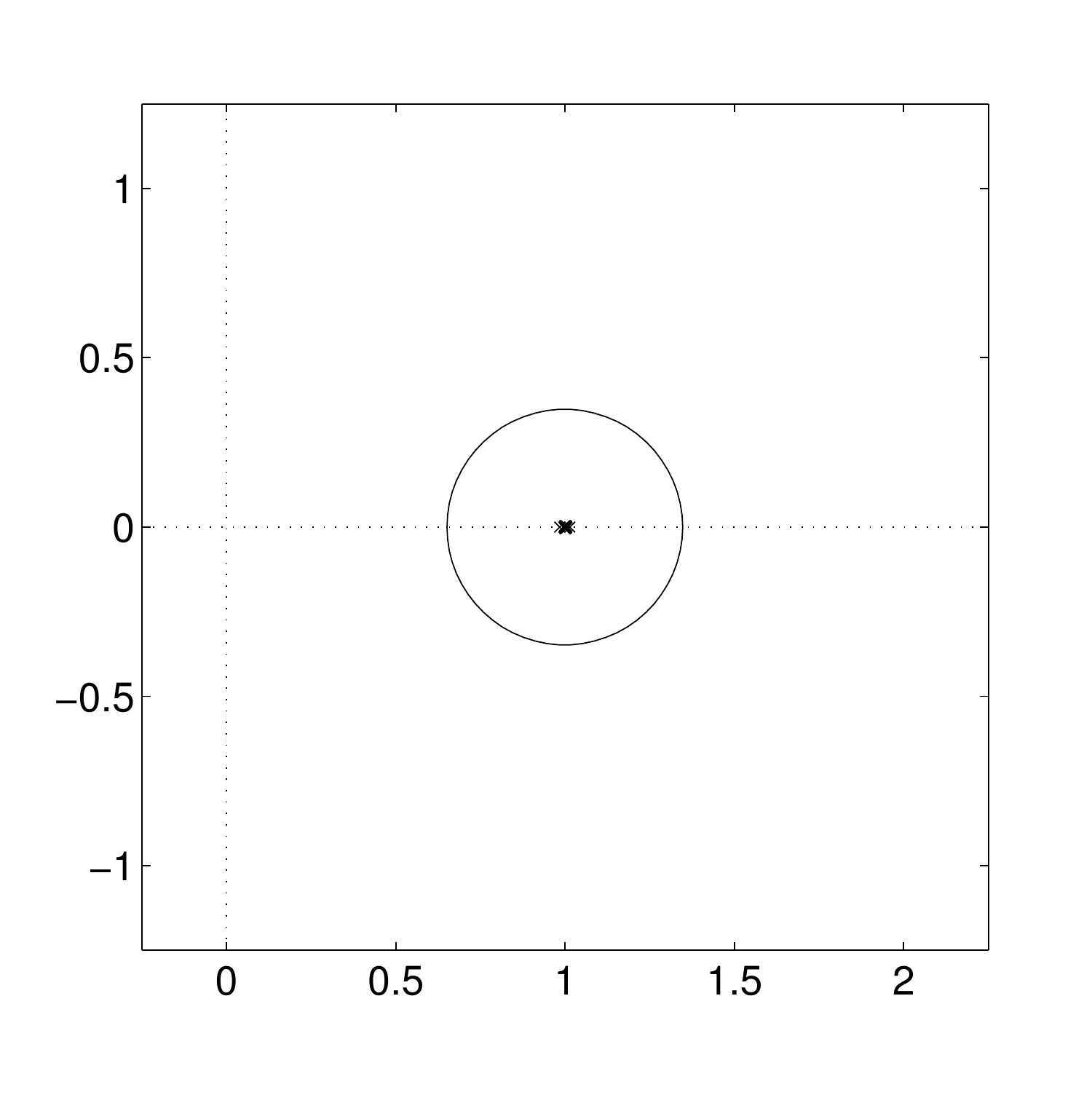}
}
\hspace{2.2cm}$\fov{B}$\hspace{3.2cm}$\fov{\widehat{B}}$\hspace{3cm}$\fov{\widehat{CB}}$
\caption{Spectrum and field of values of the matrices from three different test problems from section~7 before and after preconditioning by one multigrid V-cycle.}
\label{fig:preconditioned_spectrum}
\end{figure}

Figure~\ref{fig:preconditioned_spectrum} shows this comparison for three of the (smaller) examples of
section~\ref{sec:examples}. The first column depicts the field of values of the 
original matrix $B$, together with its eigenvalues. The second and third column 
give the same information for the projected matrix $\widehat{B}$ and the projected preconditioned matrix $\widehat{CB}$. The figure shows that the field of values almost 
remains unchanged when we go from $B$ to $\widehat{B}$. We note that $0$ must lie in the topological interior of $\fov{B}$, since if it were contained in the boundary, we would have 
$\nullsp(B) = \nullsp(B^T)$ by a result of \cite{Kippenhahn51}, see also \cite{HornJohnson:topics}, i.e.\ $x = \one.$ This would mean that $A$ is doubly stochastic which is not the case in our examples. An additional information, which due to the scaling cannot be seen in the figure, is that the field of values $\fov{\widehat{B}}$ still contains $0$ in its interior in all three examples, so that Theorem~\ref{starke:thm} cannot be used to predict a bound for the convergence speed of (non-preconditioned) GMRES.
For the preconditioned systems, $\fov{\widehat{CB}}$ is nicely bounded away from 0, and thus Theorem~\ref{sing_gmres2:thm} predicts a rapidly convergent method which is what we observe in our numerical experiments.

\section{Numerical examples}\label{sec:examples}
In this section, we report the results of numerical tests performed with the proposed method for a variety of Markov chains. The test problems are chosen such that many different scenarios occur: We consider (1) problems with a regular geometry as well as problems without any exploitable structure, (2) problems in which all transition probabilities are within the same order of magnitude as well as problems with highly varying probabilities,  and (3) structurally symmetric problems as well as nonsymmetric ones. For each test example, we report the number of iterations needed to reduce the scaled residual $\|Bx^{(k)}\|/\|x^{(k)}\|$ for the iterate $x^{(k)}$ to  $10^{-7}$, both for the BAMG preconditioned GMRES method and the GMRES method without preconditioning. For the latter, to limit memory and computational cost, we restart after every 50 iterations, i.e.\ we perform GMRES(50).
As a complement to the operator complexity $o_c$ from \eqref{operator_complexity_def:eq}, we also report the grid complexity $o_g = \frac{1}{n}\sum_{i = 1}^L n_i$.
These measures are commonly used in the multigrid literature to give an indication of how efficiently one cycle of the multigrid method can be executed. The lower the complexities, the more efficient a single cycle will be. The standard parameters for the method are three pre- and post-smoothing iterations, which is always the weighted Jacobi iteration with $\omega = 0.7$,  and $r = 8$ test vectors, unless explicitly stated otherwise. Most of the test cases are taken either from~\cite{BoltenBrandtBrannickFrommerKahlLivshits2011} or from~\cite{DeSterckManteuffelMcCormickRugeMillerPearsonSanders2010}. 

\begin{figure}
\centering
\includegraphics[width = 0.8\textwidth]{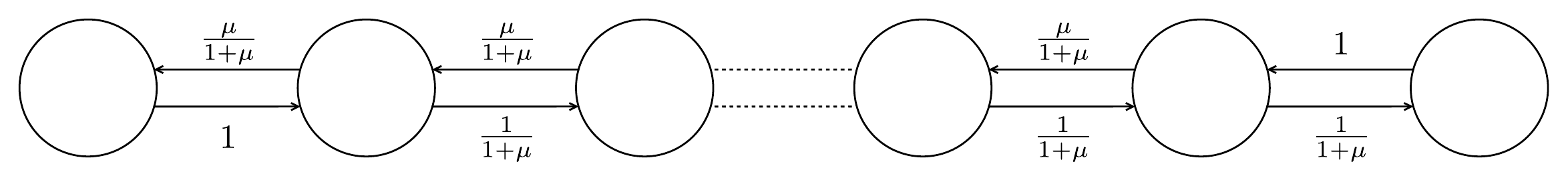}
\caption{One-dimensional birth-death chain}
\label{fig:birthdeath}
\end{figure}

\begin{table}
\centering
\footnotesize
\begin{tabular}{|c|c|c|c|c|c|c|c|}
\hline
$n$ & 1025 & 2049 & 4097 & 8193 & 16385 & 32769 & 65537\\
\hline
\hline
GMRES(50) & $> 1000$ & $> 1000$ & $> 1000$ & $> 1000$ & $> 1000$ & $> 1000$ & $> 1000$\\
BAMG + GMRES & 3 & 3 & 3 & 4 & 4 & 4 & 5 \\
BAMG$^2$ + GMRES & 1 & 1 & 1 & 1 & 1 & 2 & 2 \\
\hline
\end{tabular}
\caption{Iteration numbers for the one-dimensional birth-death chain}
\label{tab:birthdeath}
\end{table}

The first test example, a birth-death chain, has a regular one-dimensional structure. From each node, only transitions to its left and right neighbor are possible as depicted in Figure~\ref{fig:birthdeath}, which leads to a tridiagonal matrix $B$. We choose the parameter $\mu = 0.96$ as in~\cite{BoltenBrandtBrannickFrommerKahlLivshits2011,DeSterckManteuffelMcCormickRugeMillerPearsonSanders2010}.

For this example geometric coarsening (i.e., choosing every other variable as a coarse variable) is performed (until a problem size $n_c < 30$ is reached) and interpolation is done only from direct neighbors (with at most $c = 2$ interpolatory points), resulting in grid and operator complexities that are slightly below 2. The required iteration numbers for this problem are shown in Table~\ref{tab:birthdeath}. In the first row, the required number of iterations of non-preconditioned GMRES is reported. %  to be able to better judge the effectiveness of the preconditioner. 
The second and third rows contain results obtained by performing one or two BAMG setup cycles, respectively, and then using the fixed hierarchy as a preconditionener for GMRES. The given integer value indicates the number of GMRES iterations needed in addition to the BAMG setup cycles to achieve the prescribed stopping tolerance, i.e., 1 means that one or two setup cycles and one iteration of preconditioned GMRES we performed. The iteration numbers clearly show that preconditioning improves the convergence behavior of GMRES substantially and that the method scales for the given range of problem sizes. The use of two setup cycles gives another noticeable improvement over just one cycle. This is to be expected since for two setup cycles the transfer operators are constructed with respect to the approximate singular vectors from the previous cycle, as opposed to just smoothed random vectors when only one cycle is used.

Due to the different transition probabilities to the left and to the right, the solution vector $x$ contains entries with highly varying order of magnitude, ranging from $\mathcal{O}(\mu^n)$ on the left end of the chain to $\mathcal{O}(1)$ on the right end. This is one reason why this test example is difficult to solve for some methods, as can be seen from the results for the non-preconditioned GMRES method in Table~\ref{tab:birthdeath}.  For the method
considered in~\cite{BoltenBrandtBrannickFrommerKahlLivshits2011} that, while the residual can be reduced below the prescribed tolerance in a few iterations, the computed solution contains small negative entries, which should not be the case. In the preconditioned method proposed here, we do not observe this behavior. % and the iteration numbers reported in Table~\ref{tab:birthdeath} show that, like for other one-dimensional network problems, for which we do not report results here, we can again assume that our method is optimal.

\begin{figure}[t]
\centering
\subfloat[][]{
\begin{minipage}[c][1\width]{0.5\textwidth}
\centering
\includegraphics[width = 0.65\textwidth]{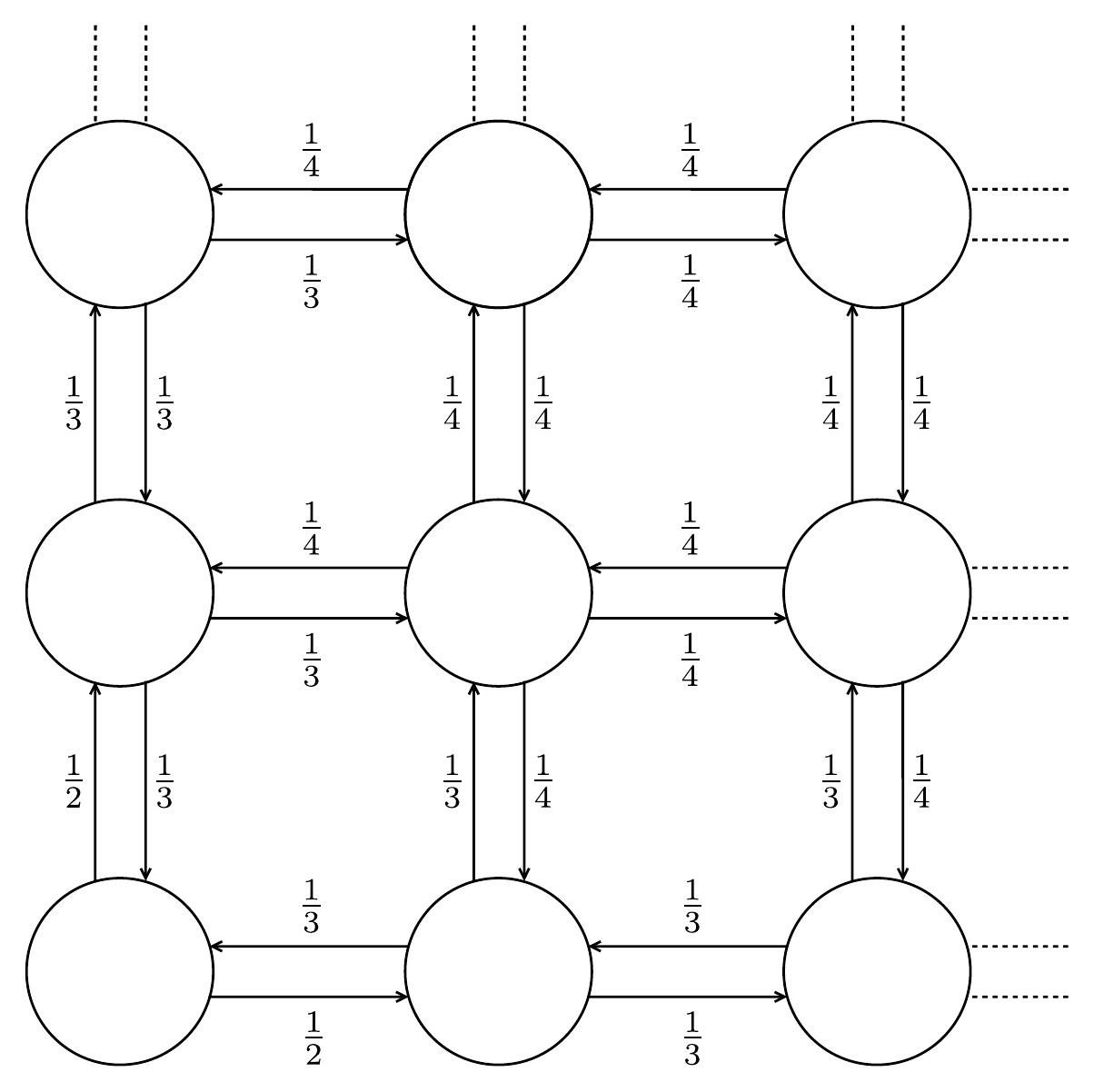}
\label{fig:uniform2d}
\end{minipage}
}
\subfloat[][]{
\begin{minipage}[c][1\width]{0.5\textwidth}
\centering
\includegraphics[width = 0.65\textwidth]{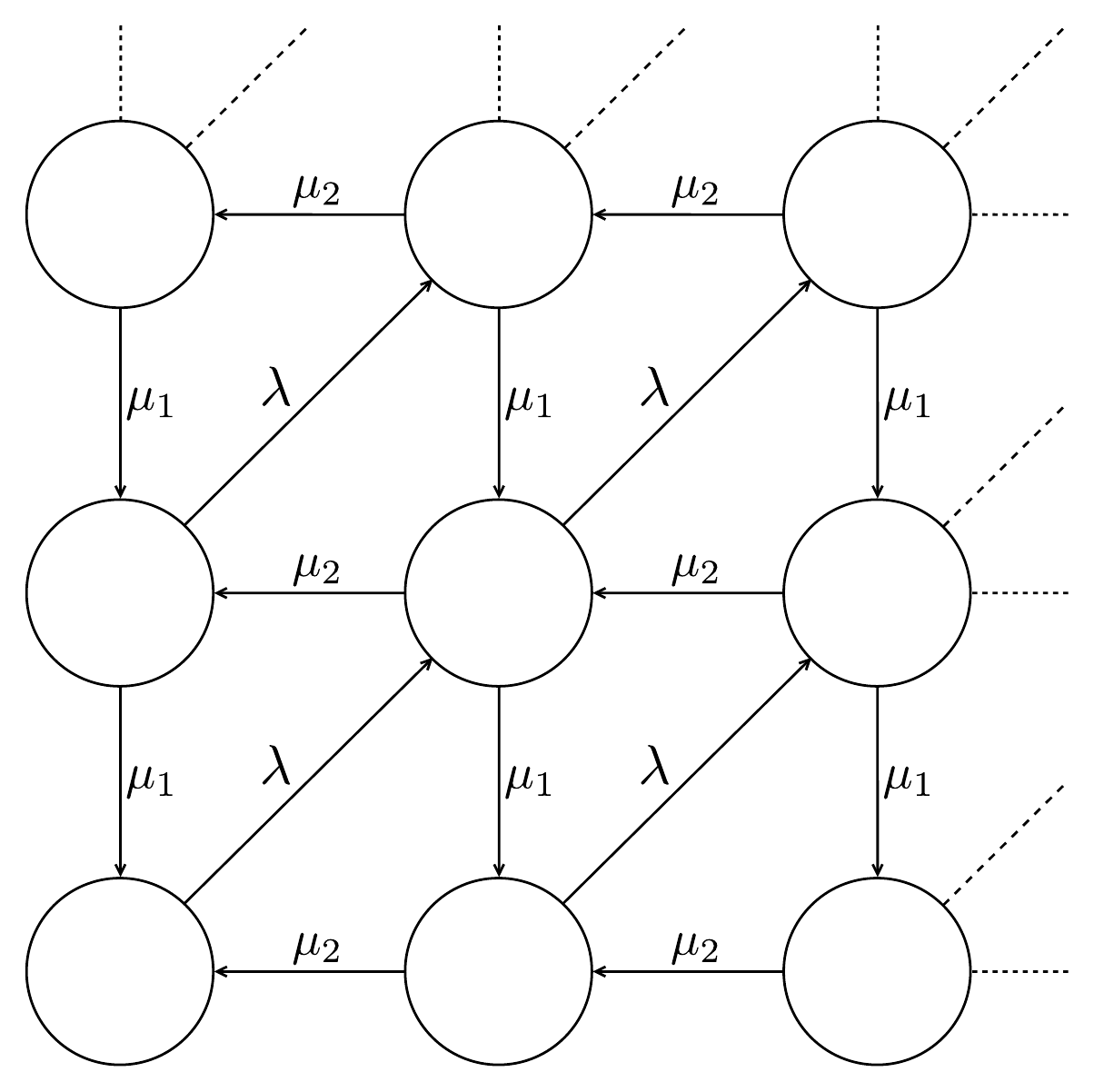}
\label{fig:tandem}
\end{minipage}
}
\caption{Uniform two-dimensional network (left) and two-dimensional tandem queuing network}
\end{figure}

\begin{table}
\centering
\footnotesize
\begin{tabular}{|c|c|c|c|c|c|c|}
\hline
$n$ & 1089 & 4225 & 9409 & 16641 & 25921 & 66049\\
\hline
\hline
GMRES(50)  & $52$ & $89$ & $105$ & $170$ & $200$ & $240$ \\
BAMG + GMRES & 6 & 6 & 6 & 7 & 8 & 10\\
BAMG$^2$ + GMRES & 4 & 4 & 4 & 5 & 5 & 6\\
\hline
\end{tabular}
\caption{Iteration numbers for the uniform two-dimensional network}
\label{tab:uniform2d}
\end{table} 

In the next two test examples, the Markov chains have a regular two-dimensional geometric structure. The first two-dimensional problem we consider is a uniform network, as depicted in Figure~\ref{fig:uniform2d}. All transition probabilities from one state are equal to the reciprocal of the number of neighboring states. We again use our standard parameters and  geometric coarsening. In the two-dimensional case this means that every other node in both directions is chosen as a coarse level node. We allow interpolation from up to $c = 4$ neighboring coarse level variables which leads to grid and operator complexities that are bounded by $1.4$ and $1.8$, respectively. Coarsening is done down to a problem size of $17^2$. The results obtained for this problem are reported in Table~\ref{tab:uniform2d}. 

% We observe a behavior similar to that for the one-dimensional test cases, showing that our method is as efficient for two-dimensional problems. 
As in the one-dimensional test case, we see that BAMG preconditioning needs far less iterations than non-preconditioned GMRES. The scaling behaviour of the BAMG preconditioned method is again very favorable, although we now need more than one iteration also in case of two setup cycles.

\begin{table}
\centering
\footnotesize
\begin{tabular}{|c|c|c|c|c|c|c|}
\hline
$n$ & 1089 & 4225 & 9409 & 16641 & 25921 & 66049 \\
\hline
\hline
GMRES(50) & $165$ & $295$ & $410$ & $570$ & $750$ & $1030$\\
BAMG + GMRES & 6 & 6 & 8 & 9 & 10 & 12\\
BAMG$^2$ + GMRES & 4 & 4 & 4 & 5 & 5 & 6\\
\hline
\end{tabular}
\caption{Iteration numbers for the tandem queuing network}
\label{tab:tandem}
\end{table}

The second test case with a regular two-dimensional geometric structure is the tandem queuing network example from~\cite{Stewart1994} which is depicted in Figure~\ref{fig:tandem}. The Markov chain considered here is the embedded discrete Markov chain of a continuous-time homogeneous Markov chain that describes a queuing system where each customer has to be served at two stations with one server each. In the continuous time case, the service time at station 1 and 2 is exponentially distributed with mean value $\dfrac{1}{\mu_1}$ and $\dfrac{1}{\mu_2}$, respectively, and customers arrive according to a Poisson process with rate $\lambda$. In the discrete case, this leads to transition probabilities $\mu_1,\mu_2$ and $\lambda$, as in Figure~\ref{fig:tandem}. The diagonal transitions describe the arrival of a new customer, the horizontal and vertical arrows describe that a customer has been served at station 1 or 2. We chose the parameters to be $\lambda = \frac{11}{31}, \mu_1 = \frac{10}{31}$ and $\mu_2 = \frac{10}{31}$, as in \cite{BoltenBrandtBrannickFrommerKahlLivshits2011,DeSterckManteuffelMcCormickRugeMillerPearsonSanders2010}. In contrast to the test cases so far, the matrix of the tandem queuing network is structurally nonsymmetric and has a complex spectrum. We test the method with the same parameters as we used for the uniform two-dimensional network and the results are given in Table~\ref{tab:tandem}. The behavior is almost the same as for the uniform two-dimensional network, although the problem is much harder to solve for the non-preconditioned GMRES iteration. This again underlines the high quality of the adaptively computed multigrid hierarchy.
 
\begin{figure}
\centering
% \subfloat[][]{
% \begin{minipage}[c][1\width]{0.5\textwidth}
% \includegraphics[width = 0.7\textwidth]{}
% \end{minipage}
% }
% \subfloat[][]{
%\begin{minipage}[c][1\width]{0.5\textwidth}
\includegraphics[width = 0.9\textwidth]{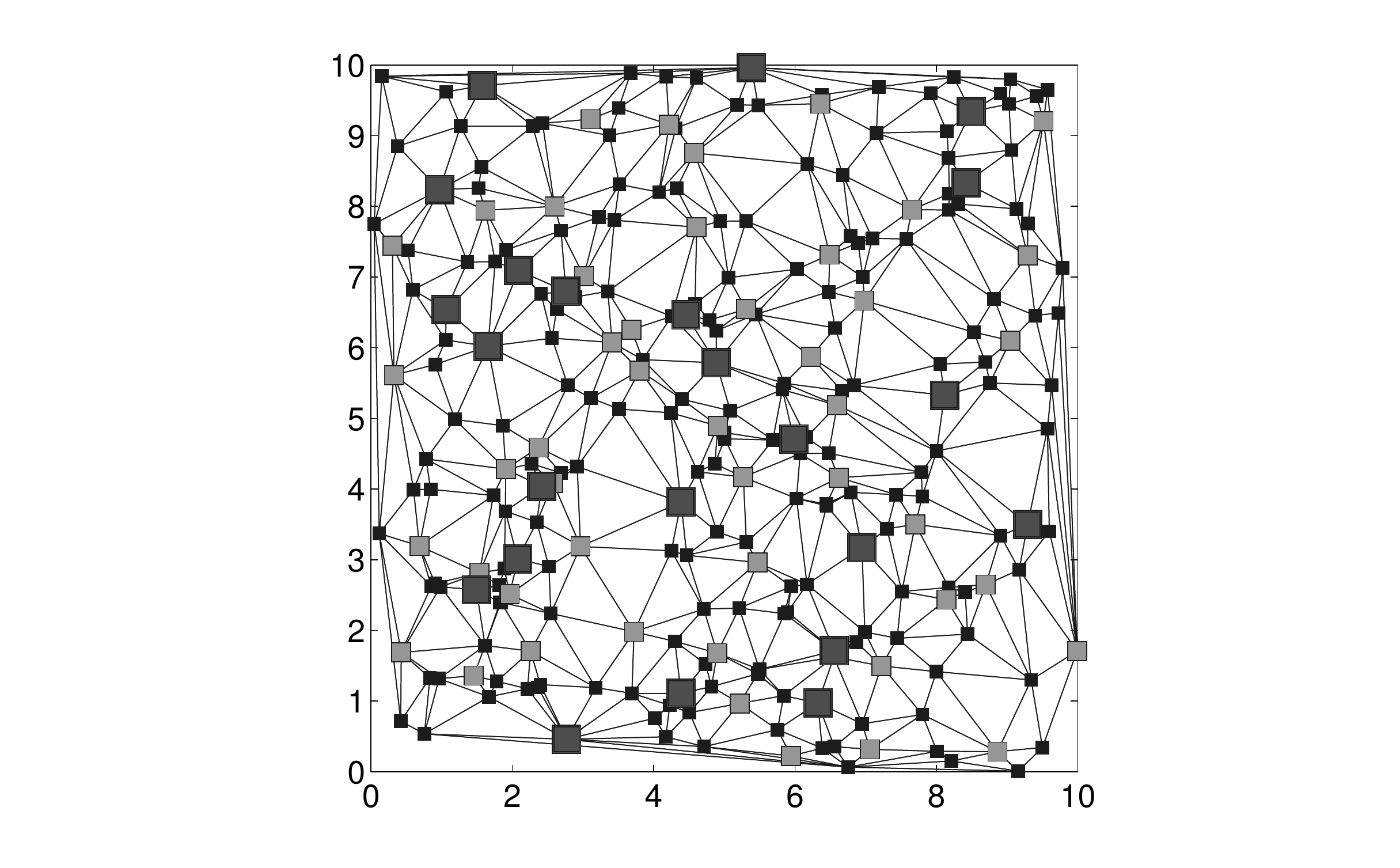}
%\end{minipage}
%}
\caption{Random planar graph with 256 nodes and coarsening by compatible relaxation. The smallest nodes are the initial grid, the medium size nodes are selected for the second level and the largest nodes for the third level}
\label{fig:planargraph}
\end{figure}

\begin{table}
\centering
\footnotesize
\begin{tabular}{|c|c|c|c|c|c|c|}
\hline
$n$ & 1024 & 4096 & 8192 & 16384 & 32768 & 65536\\
\hline
\hline
GMRES(50) & $50$ & $80$ & $105$ & $115$ & $135$ & $170$ \\
BAMG + GMRES & 8 (2)& 10 (2)& 12 (3)& 15 (3) & 19 (4) & 22 (4)\\
BAMG$^2$ + GMRES & 3 (2)& 3 (2)& 4 (3)& 5 (3) & 5 (4) & 6 (4)\\
\hline
\end{tabular}
\caption{Iteration numbers and number of levels for the random planar
graphs}
\label{tab:planargraph}
\end{table}

Our last two test examples do not exhibit any regular geometric structure, so that the coarsening has to be done algebraically. In the tests we use a compatible relaxation algorithm~\cite{BrannickFalgout2010}. We first consider random walks on planar graphs. These planar graphs are generated, as suggested in~\cite{DeSterckMillerSandersWinlaw2010}, by choosing $n$ random points in the unit square and then constructing a Delaunay triangulation of these points. Transitions between states are then restricted to  the edges of the triangles, with transition probabilities again chosen as the reciprocals of the number of outgoing edges for each node. For these problems, we use $r = 8$ test vectors, coarsen down to a grid size $n_c < 500$ and allow interpolation from up to $c = 3$ neighboring nodes. The number of pre- and postsmoothing iterations are both increased % , compared to the previous problems, 
to 5. The results of the method are given in Table~\ref{tab:planargraph}. The numbers in brackets  report the number of levels used to reach a problem size of less than 100 variables on the coarsest level. This results in grid and operator complexities which are bounded by 1.3 and 1.5, respectively, for all considered problem sizes.
% \textbf{Why do we do thus just here?} 
The results show a significant improvement compared to the eigenvector based BAMG approach in~\cite{BoltenBrandtBrannickFrommerKahlLivshits2011}, where a substantial increase in iteration numbers was observed for increasing problem sizes. With the proposed singular vector based BAMG approach we observe now only a mild dependence on the number of iterations on the problem size, especially when using two setup cycles.%  This shows once again how good the multigrid hierarchy based on approximate singular vectors is. 
% \textbf{How do we make the edges directed?}

\begin{figure}
\centering
\includegraphics[width = 0.25\textwidth]{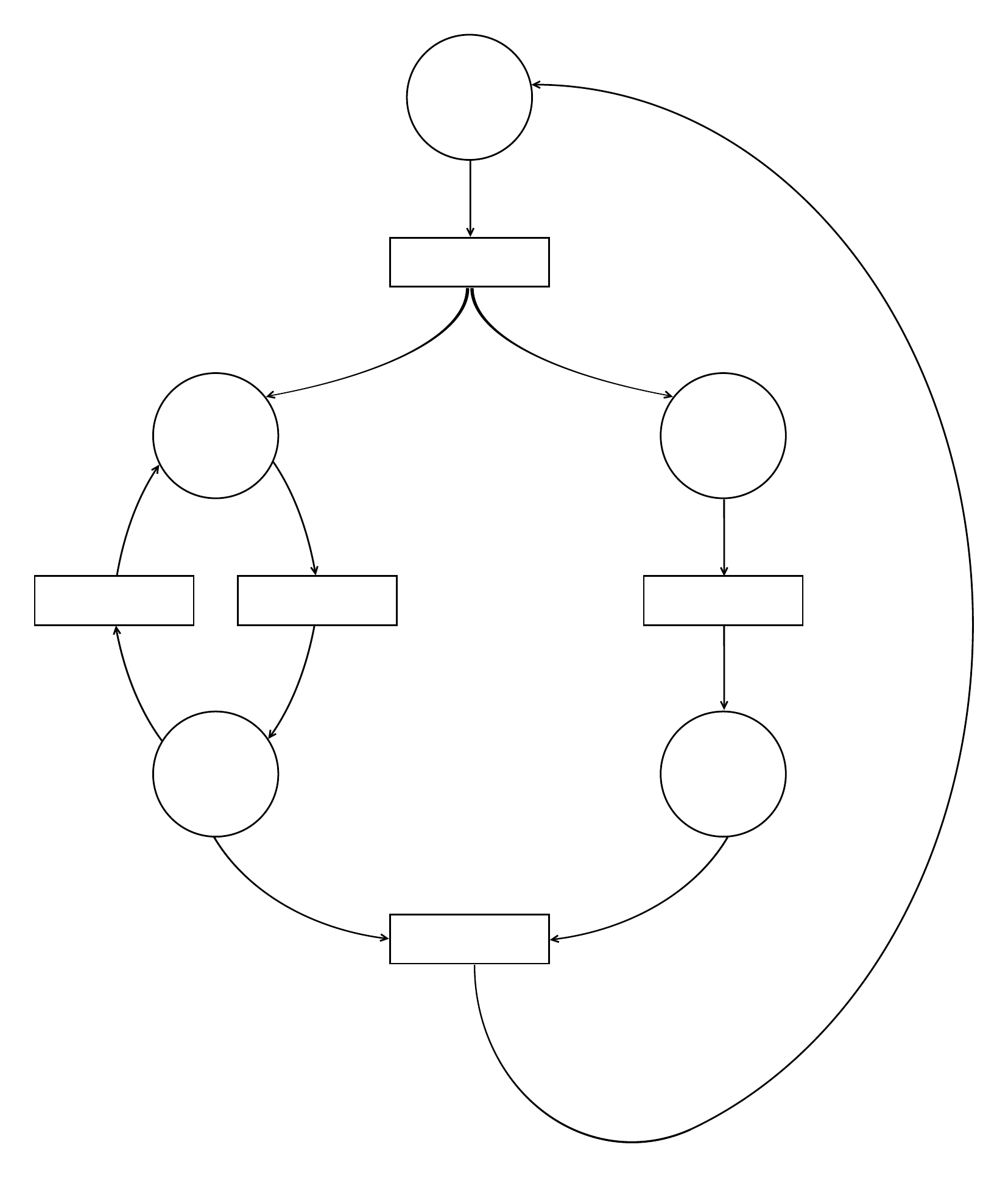}
\caption{Petri net with five places and five transitions}
\label{fig:petrinet}
\end{figure}

\begin{table}[t]
\centering
\footnotesize
\begin{tabular}{|c|c|c|c|c|}
\hline
$n$ & 1496 & 3311 & 10416 & 23821\\
\hline
\hline
GMRES(50) & $72$ & $90$ & $95$ & $180$\\
BAMG + GMRES & 6 & 6 & 6 & 6\\
BAMG$^2$ + GMRES & 5 & 5 & 5 & 6 \\
\hline
\end{tabular}
\caption{Iteration numbers for the Petri net}
\label{tab:petrinet}
\end{table}

The last test example is a Petri net taken from~\cite{Molloy1982}. It was, e.g., also considered in~\cite{DeSterckMillerSanders2011} and~\cite{HortonLeutenegger1994}. Petri nets are used for the description of distributed systems and consist of places and transitions. In the simple Petri net depicted in Figure~\ref{fig:petrinet}, the circles represent places and the bars represent transitions. The places can be seen as conditions under which certain transitions can occur: A transition with ingoing edges from two places can only occur when both places are marked by at least one {\em token}. When the transitions occur, the tokens will disappear and in each place reached by an outgoing edge of the transition, a new token will appear. This way, starting from an initial marking, the tokens will travel through the different places of the Petri net. A Petri net can be transformed into a discrete Markov chain as follows (cf.~\cite{MotameniMovagharSiasifarMontazeriRezaei2008}): Each marking of the Petri net that is reachable from a given initial marking corresponds to one state of the Markov chain. Transitions occur between all markings that can be converted into each other by the firing of one transition. The transition probabilities can then be calculated from the known firing probabilities of the transitions of the Petri net. The size of the resulting Markov chain depends on the number of tokens in the initial marking and grows very fast. When the initial marking consists of only one token in the uppermost node in the Petri net from Figure~\ref{fig:petrinet}, the Markov chain will have only 5 states, for 10 tokens in the same node the Markov chain has 506 states, and for 30 tokens it already consists of 10416 states, although the underlying Petri net has only 5 places and 5 transitions. The transition matrix of the resulting Markov chain is structurally nonsymmetric with complex spectrum, as depicted in Figure~\ref{fig:preconditioned_spectrum}. We chose the firing probabilities of the transitions to be the same as in~\cite{DeSterckMillerSanders2011} and~\cite{HortonLeutenegger1994}. Since there is no underlying geometric structure, coarsening is again done by compatible relaxation. We use $r = 10$ test vectors and the other parameters of the algorithm are the same as for the planar graph problems. The results of the tests are reported in Table~\ref{tab:petrinet} and the grid and operator complexities are bounded by 1.7 and 2.5, respectively.. The problem sizes correspond to a Petri net with an initial marking of 15, 20, 30 and 40 tokens in the uppermost node, respectively. While the two versions of the preconditioned GMRES iteration scale for the problem sizes considered, we see no significant reduction in the iteration numbers when using two setup cycles instead of one.

\section{Conclusions}\label{sec:conclusion}

In this paper, we developed a bootstrap algebraic multigrid framework for non-symmetric matrices.  We demonstrated 
that using singular vectors instead of eigenvectors in the adaptive construction of transfer operators leads to a
new, more robust approach for the targeted Markov problems. The approach is shown to
be especially suitable in the context of Markov chains because natural assumptions
on the coarse grid matrix, e.g., singularity and column sum zero,
%In this paper, we described how to extend the bootstrap algebraic multigrid framework to non-symmetric matrices by using singular vectors instead of eigenvectors for the adaptive construction of transfer operators. This was shown to be especially suitable in the context of Markov chains because natural assumptions on the coarse grid matrix, like singularity and column sum zero,
can automatically be fulfilled when using the non-symmetric bootstrap setup. We showed that this can be accomplished by modifying the least squares interpolation to allow for linear constraints.

% We also presented an analysis of the convergence behavior % This allowed the analysis of the convergence behavior
% of the BAMG preconditioned GMRES iteration for solving these singular systems. The analysis showed
% that we can expect the precondtioned GMRES iteration to converge---the preconditioned matrix still has index 1---if the setup procedure 
% approximates the singular vectors well enough. 

% Expressing the iteration for the singular system as 
% one for a projected non-singular system, we could also show experimentally that the projected 
% system matrix has its field of values well separated from 0. Standard results on the speed 
% of convergence of GMRES can thus be used to explain the fast convergence of the method.

We experimentally explained the fast convergence of the preconditioned system by relating the GMRES iteration for the singular system to the one for an equivalent, projected non-singular system with a system matrix that has its field of values well separated from 0. Standard results on the speed 
of convergence of GMRES in the non-singular case can then be used to explain the fast convergence of the method.

The computational examples show the superiority of the singular vector based method over the 
eigenvector based method from~\cite{BoltenBrandtBrannickFrommerKahlLivshits2011}. The method 
further reduces the iteration count and yields near-optimal scaling behavior for the considered 
test problems, especially when two setup cycles are used, therefore taking full advantage of the 
improved test vectors. Thus we expect the new method to be particularly efficient for large problem sizes. When comparing computational cost in our numerical examples one should be aware that one iteration with the BAMG preconditioner is roughly equivalent to $(s_{pre}+s_{post})o_c$ 
matrix-vector multiplications, where $s_{pre}$ and $s_{post}$ is the number of pre- and post-smoothing iterations, respectively, and $o_c$ is the operator complexity. For each BAMG setup cycle, this cost has to be multiplied by $2r$, $r$ being the number of test vectors in $\mathcal{U}$ and $\mathcal{V}$.
Measured in this manner, our numerical examples indicate that for the still quite moderate problem 
sizes considered there we already have a substantial gain of the new BAMG preconditioned method over non-preconditioned GMRES for the more difficult problems, i.e.\ the death-birth chain, the tandem queueing network and the Petri net.

\bibliographystyle{elsarticle-num}
\bibliography{markov}{}

\end{document}